\DeclareMathAlphabet{\mathpzc}{OT1}{pzc}{m}{it}
\def\setliststart#1{\setcounter{\@listctr}{#1}%
  \addtocounter{\@listctr}{-1}}
\newtheorem{theorem}{{\bf Theorem}}[section]
\newtheorem{lemma}[theorem]{Lemma}
\newtheorem{proposition}[theorem]{Proposition}
\newtheorem{remarks}[theorem]{Remark}
\newtheorem{example}[theorem] {Example}
\newtheorem{definition}[theorem]{Definition}
\newtheorem{assumption}[theorem]{Assumption}
\numberwithin{equation}{section}
\newcommand{\R}{\mathbb{R}}
\newcommand{\E}{\mathbb{E}}
\newcommand{\N}{\mathbb{N}}
\newcommand{\GG}{\mathsf{G}}
\newcommand{\SR}{\mathbf{CC}}
\newcommand{\X}{\mathcal{X}}
\newcommand{\D}{\mathcal{D}}
\newcommand{\g}{\mathfrak{g}}
\renewcommand{\div}{\mbox{\rm div}}
\renewcommand{\P}{\mathbb{P}}
\def\acc{\`}
\DeclareMathOperator*{\eps}{\varepsilon}
\def\moverlay{\mathpalette\mov@rlay}
\def\mov@rlay#1#2{\leavevmode\vtop{%
   \baselineskip\z@skip \lineskiplimit-\maxdimen
   \ialign{\hfil$\m@th#1##$\hfil\cr#2\crcr}}}
\newcommand{\charfusion}[3][\mathord]{
    #1{\ifx#1\mathop\vphantom{#2}\fi
        \mathpalette\mov@rlay{#2\cr#3}
      }
    \ifx#1\mathop\expandafter\displaylimits\fi}
\definecolor{ao}{rgb}{0.0, 0.5, 0.0}
\definecolor{brown}{rgb}{0.59, 0.29, 0.0}
\title[Fokker-Planck equations and probabilistic counterparts]{Fokker-Planck equations on homogeneous Lie groups\\ and probabilistic counterparts}
\author{Lucia Caramellino\and Cristian Mendico}
\address{Dipartimento di matematica, Universit\acc a degli studi di Roma Tor Vergata -- Via della Ricerca Scientifica 1, 00133 Roma}
\email{caramell@mat.uniroma2.it} 
\address{Institut de Math\'ematique de Bourgogne, UMR 5584 CNRS, Universit\'e Bourgogne, 21000 Dijon, France}
\email{cristian.mendico@u-bourgogne.fr}
\date{\today}
\subjclass[2020]{35H20 - 35R03 - 58J65 - 60J60}
\keywords{Parabolic PDEs on Lie groups, Fokker-Planck equations, Diffusion processes on Lie groups, Homogeneous vector fields.}
\thanks{{\it Acknowledgement:} C.M. is partially supported by Istituto Nazionale di Alta Matematica, INdAM-GNAMPA project 2023 and the King Abdullah University of Science and Technology (KAUST) project CRG2021-4674 ``Mean-Field Games: models, theory and computational aspects". Both authors acknowledge  the MIUR Excellence Department Project MatMod@TOV awarded to the Department of Mathematics, University of Rome Tor Vergata, CUP E83C23000330006. \\{\bf Disclosure statement.} The authors report there are no competing interests to declare.}
\begin{document}
\begin{abstract}
We address the well-posedness of subelliptic Fokker-Planck equations arising from stochastic control problems, as well as the properties of the associated diffusion processes.  Here, the main difficulty arises from the possible polynomial growth of the coefficients, which is related to the growth of the family of vector fields generating the first layer of the associated Lie algebra. We prove the existence and uniqueness of the energy solution, and its representation as the transition density of the underlying subelliptic diffusion process. Moreover, we show its H\"older continuity  in time w.r.t. the Fortet-Mourier distance, where the H\"older seminorm depends on the degree of homogeneity of the vector fields. Finally, we provide a probabilistic proof of the Feyman-Kac formula,  as a consequence of the uniform boundedness in finite time intervals of all moments.
\end{abstract}
\maketitle


\def\D{D}

\section{Introduction}


Real models, possibly involving stochasticity, can be expressed in terms of nonholonomic structures. As an example, albeit limited, we recall neural models \cite{Neuro} (and references therein),  deep neural networks \cite{Teichman},  variational formulations for likelihood estimation  in statistics \cite{NEW, NEW1, Stat}, mean field models  \cite{Mannucci2,Mendico_MathAnn,Mannucci1} and subelliptic regularizations for ergodic control problems as in \cite{Mendico_JDE}. 

Homogeneous Lie groups are special nonholonomic
structures and, following the analysis developed in \cite{Mendico_MathAnn}, 
we study here Fokker-Planck equations and the probabilistic consequences 
on such anisotropic structures. In particular, we can deal with vector fields having possible polynomial growth and, up to the authors knowledge, no results are available for stochastic control problems when the underlying diffusion process is driven by coefficients which are not  sublinear and driven by a drift which is merely locally bounded in time.

\subsection*{Problem and state of the art.}
Here, the anisotropic structure of the Euclidean space $\mathbb{R}^d$ is the one endowed by the  Carnot-Carath\'eodory distance induced by the vector fields $X_1, \dots,$ $ X_m$ with $m < d$ (see \eqref{dSR}).
We suppose that ${X_1, \dots, X_m}$ satisfy the H\"ormander condition \eqref{Hor} and are the left-invariant vector fields associated with a basis of  the first layer of a homogeneous Lie group $\GG$. We also  assume that ${X_1, \dots, X_m}$ are $1$-homogeneous w.r.t.  the natural dilation function associated with the group structure (we can actually deal with general positive homogeneous coefficients, and we will explain how things will change). We refer to \Cref{Lie} for a detailed description of the properties of the vector fields and of the group structure. 
In particular,  \Cref{example} provides a non trivial example of vector fields and of the associated dilation function (showing that the coordinate forms  are not necessarily linear).

We denote by $\Delta_{\GG}$ and $\div_\GG$ the Laplace operator and the divergence operator on $\GG$ respectively: 
$$
\Delta_{\GG} u = \sum_{i=1}^{m} X^2_i u 
\quad\mbox{and}\quad
\div_{\GG} v = X_1 v_1 + \dots + X_m v_m,
$$
where $u:\R^d\to \R$ and $v:\R^d\to \R$. Then, for $s \geq 0 $ and $x \in \R^d$, we study, both from an  analytical and probabilistic viewpoint, the subelliptic Fokker-Planck equation 
\begin{equation}\label{FP_intro}
\begin{cases}
\partial_t \rho(t, y) - \Delta_{\GG} \rho(t, y) - \div_{\GG}(\beta(t, y) \rho(t, y)) = 0, & (t, y) \in (s, \infty) \times \R^d
\\
\displaystyle
\rho(t, y)dy\rightharpoonup
\delta_{\{x\}}(dy)
\mbox{ as }{t \downarrow s},
\end{cases}
\end{equation}
in which $\beta: [0,\infty) \times \R^d \to \R^m$ is a smooth vector field, $\delta_{\{x\}}$ denotes the Dirac mass at $x$ and $\rightharpoonup$ denotes weak convergence. Hence, we are considering the fundamental solution, allowing one to represent the solution to the Fokker-Plank equation for \textit{any} initial datum.

Fokker-Planck equations defined on ani\-so\-tropic structures have been widely studied. Equation  \eqref{FP_intro} differs from the one typically studied in the literature for the presence of a time dependent drift term $\beta$ with weaker regularity. For example, in \cite{Sub2, Sub3, Sub4, Sub5} the authors address some regularity properties, such as Harnack's inequality and Shauder type estimates, of the fundamental solution to the subelliptic heat equation in the case of $\beta\equiv 0$ or $\beta$ independent of time. For the case of time independent or constant drift $\beta$ we also refer to\cite{Lanconelli, Anceschi, Pascucci} where the authors first indicate the applications to finance and economics as motivation of their work. A general and self-contained introduction to the subject can be also found for instance in the  monographs  \cite{Sub1, BLU, Bramanti_2010}.

Moreover, it is worth mentioning that our problem can be tackled with a different perspective, which makes use of Dirichlet forms. As an example, we mention the study of the transition probability function associated with the infinitesimal generator of a time dependent Dirichlet form in \cite{Kim}, where, however, the uniform ellipticity is assumed and thus it does not apply to our case. As a further example, in the comprehensive papers \cite{Saloff} and \cite{Saloff1} the authors deal with 
Harnack's inequality, regularity results, existence and size estimates of the Green's function for more general structures, such as symmetric and non-symmetric Dirichlet forms defined on metric measure spaces.
However, our problem is not covered by these references. Indeed, the coefficients of the time-dependent Dirichlet form are assumed to be globally bounded in time, and this is not our case of interest. In fact, due to the examples from stochastic control problems for instance in \cite{Mendico_MathAnn}, we  assume here that $\beta$ is merely locally bounded in time. 

\subsection*{Results.}
As a first step, we consider the PDE in \eqref{FP_intro} when the initial datum is a probability measure having bounded density w.r.t. the Lebesgue measure (see \eqref{FP_regular}). In such a case, by using a purely PDE approach, in \Cref{thm:ex_FP1} we prove  that there exists a unique energy solution. Moreover, such solution remains bounded for all time and it is $\frac{1}{2}$-H\"older continuous in time w.r.t. the Fortet-Mourier distance tailored to the Lie group structure (for the definition, see \eqref{dFM}). 
Afterwards, by developing an approximation argument, in \Cref{delta} we state the same results for the Fokker-Plank equation \eqref{FP_intro}.

Let us stress that the notion of energy solution (see \Cref{def2}) is tailored to our anisotropic structure of the state space. Indeed, the classical integration by parts used to get weak solutions 
would give the extra-terms $\div_{\R^d} (X_i)$ (Euclidean divergence operator, see \eqref{div_euclidea}). In order to deal with those terms one should look at the coefficients, in a coordinate chart, of the vector fields and one should work under more restrictive assumptions on them.
In our case, as $X_1, \dots, X_m$ are left invariant and homogeneous w.r.t. the natural dilation function associated with the sub-Riemannian structure, we can overcome such an issue. We also stress that, as for the classical Euclidean case, we prove the uniqueness result assuming the following minimal request:  for every $T>s$,
\begin{equation}\label{negative_intro}
\sup_{t \in [s, T]} \| [\div_{\GG} \beta(t, \cdot)]_{-}\|_{(L^{\infty}(\R^d))^m}<\infty.
\end{equation}
This is a classical condition which ensures the well-posedness and the validity of stability estimates in Lebesgue spaces for inviscid transport and continuity equations in the Euclidean space, along with a suitable growth of the coefficients.  
Moreover, we point out that our approach is purely intrinsic and does not involve the Euclidean representation of the Lie group structure. Such a metric approach allows us to overcome the issue of polynomial growth of the diffusion coefficients and the lack of compactness of the state space.

Once the analytical results are achieved, we move to their probabilistic counterparts. As the limiting initial datum in \eqref{FP_intro} is a Dirac mass, we are actually dealing with the so-called fundamental solution of the parabolic PDE -- classically considered -- represented as the formal dual of \eqref{FP_intro}. In fact,  \Cref{representation} states that the solution to  \eqref{FP_intro} is actually the transition density of the solution $\xi$ to the stochastic differential equation (SDE)
\begin{equation}\label{diffusion_intro}
\begin{cases}
d\xi_t = X_0(t,\xi_t)\;dt + \sqrt{2}\; \sum_{i=1}^{m} X_i(\xi_t) \circ dB^i_t
\\
\xi_s=x
\end{cases}
\end{equation}
where $B=(B^1, \dots, B^m)$ is a standard Brownian motion on $\R^m$,  $\circ$ denotes the Stratonovi\-ch integral and 
\begin{equation}\label{diffusion_intro_beta}
X_0(t,\xi) = -\sum_{i=1}^{m} \beta_i(t,\xi) X_i(\xi),
\end{equation}
$X_i(x)$ denoting the coordinate form of $X_i$. Despite the polynomial growth of the coefficient in the SDE \eqref{diffusion_intro}, under our requests the stochastic process $\xi$ does not explode (as a natural counterpart of the mass conservation of the energy solution to \eqref{FP_intro}). Furthermore, its moments are all bounded uniformly in finite time intervals (\Cref{Ito}), so that the solution to \eqref{FP_intro} has all moments, a property which doesn't seem to be achievable  with pure analytical arguments. We point out that, from the probability point of view, since $\beta$ is not constant in the space variable the existence of the transition density for \eqref{diffusion_intro} is a significant result, as it does not follow from classical  arguments in stochastic calculus (see \Cref{rem-diff} for details). Finally, as the subelliptic process $\xi_t$ has finite moments uniformly in finite time intervals, in \Cref{Feynman-Kac} we provide the Feynman-Kac formula (see e.g.  \cite{Fundamental, Schauder, Uniqueness}, where purely analytical arguments are used).

\subsection*{Applications.}
Our analysis and results  have a natural application to stochastic control problems with nonholonomic constraints. We mention the work developed in \cite{Federica, Ermal, Mendico_MathAnn, Mendico_JDE}, where the authors study the Hamilton-Jacobi equation (classically solved by the value function associated with the underlying variational problem) defined on homogeneous Lie groups and the Fokker-Planck equation driven by the optimal feedback as in the mean field games theory. The approach used therein and, in general, for the study of other multi-agent models (e.g., \cite{Multi1} and \cite{Filippo}) rely solely on the analytical study of the equations involved. Indeed, 
	as an example we can consider the minimization problem
\begin{equation}\label{value_function}
u(x, t) = \inf_{\alpha} \mathbb{E}\left[\int_{t}^{T}\left(|\alpha(s, \xi_s)|^{\gamma} + F(\xi_s)\right)\;ds + G(\xi_T) \right]
\end{equation}
where $\alpha$ is a square integrable control function and $\xi$ satisfies the stochastic equation \eqref{diffusion_intro} with 
	$$X_0(t,\xi_t) = -\sum_{i=1}^{m} \alpha_i(t,\xi_t) X_i(\xi_t).$$

In this case, it is known that the optimal strategy $\overline\alpha$ is given by
\begin{equation*}
\overline\alpha(s, \xi_s) = \gamma^* |\nabla_{\GG}u(\xi_s, s)|^{\gamma^*-2}\nabla_{\GG}u(\xi_s, s) \quad \forall\; s \in (t, T) 
\end{equation*}
where $\gamma^*$ is the conjugate index of $\gamma$. 
Thus, as shown for instance in \cite{Mendico_MathAnn}, such optimal control map satisfies the required assumptions on $\beta=\overline\alpha$ except for \eqref{negative_intro} related to the horizontal semiconcavity of the value function \eqref{value_function} which still remains an open question to be investigated. Our work aims to introduce a novel approach to such problems, and more generally, to stochastic control problems with nonholonomic constraints by using the dynamics of underlying controlled subelliptic stochastic process. This motivation also underlies the specific structure of the Fokker-Planck equation considered here. 

From the statistical viewpoint, referring to  \cite{NEW, NEW1, Stat} and references therein, we stress that the geometric structure defined by the coefficients of the diffusion process is of Riemannian type. Such assumption now could be relaxed assuming to work on an homogeneous Lie group and, thus, admitting degenerate structures. We plan to address such a problem in a future work.

\subsection*{Organization of the paper} \Cref{Lie} is devoted to the introduction of the sub-Riemannian setting and to the main assumptions in our framework. In \Cref{PDE} we address the problem of the existence, uniqueness and regularity of classical and energy solutions to the subelliptic Fokker-Planck equation. We conclude by analyzing the probabilistic counterpart in \Cref{Proba}, namely we deduce the non explosion of the underlying diffusion process, the existence of all moments, the link with the transition probability density and the Feynman-Kac formula.

\section{Homogeneous Lie groups, horizontal vector fields  and standing assumptions}\label{Lie}

For a detailed analysis of homogeneous vector fields and their properties we refer to \cite{Ancona_96}, and for more on sub-Riemannian geometry and Lie groups we refer to \cite{Barilari}, \cite{Jean}, \cite{Montgomery} and \cite[Chapter 3]{FisherRuzhansky}. In this section we introduce our setting, we state our main assumptions and we present the main results needed throughout the manuscript. 

\subsection{Homogeneous Lie groups}\label{sect:Lie}
Let $(\GG, *)$ be a homogeneous Lie group, which is a simply connected Lie group whose Lie algebra $\g$ is equipped with a family of dilations $\{D_t\}_{t>0}$ (see e.g. \cite[Section 3.1.2]{FisherRuzhansky}). 
We recall that $\{D_t\}_{t>0}$ are automorphisms of $\g$, i.e.,
\begin{equation}\label{dilations}
D_t[g_1, g_2]=[\D_tg_1, \D_tg_2], \quad g_1, g_2 \in \g,\; t > 0,
\end{equation}
The dilations \eqref{dilations} induce a direct sum decomposition on $\g$: 
\begin{equation}\label{direct}
\g = V_1 \oplus \dots \oplus V_\kappa
\end{equation}
and the subspaces $V_1,\ldots,V_\kappa$ satisfy the following property: there exist $\lambda_\kappa>\lambda_{\kappa-1}>\cdots>\lambda_1>0$ such that
\begin{equation}\label{DIL}
D_tg=t^{\lambda_{\ell}}g,\quad \mbox{for every $g\in V_\ell$,  $\ell=1,\ldots,\kappa$.}
\end{equation}	
In particular, $V_1$ is called the horizontal layer. We can identify $(\GG, *)$ with $(\R^d, *)$ via the so-called exponential map 
\begin{equation*}
exp: \g \to \GG,
\end{equation*}
see, for instance, \cite[Section 3.1.3]{FisherRuzhansky}. Indeed, given a basis $e_1,\ldots,e_d\in \g$, any $x \in \GG$ can be written in a unique way as 
\begin{equation*}
x= exp(x_1e_1+\dots + x_de_d)
\end{equation*}
which yields the identification of $x$ with $(x_1, \dots, x_d)$ and that of $(\GG, *)$ with $(\R^d, * )$, the group law $* $ being given by the Baker-Campbell-Hausdorff formula:
$$
x*  y=x+y+\frac 12 [x,y]+\frac 1{12}[x,[x,y]]+\frac 1{12}[y,[y,x]]+\cdots.
$$
Notice that, since \eqref{DIL} implies that $\g$ is nilpotent, the Baker-Campbell-Hausdorff formula actually involves a finite number of iterated brackets.

Moreover, we can transport the dilations to the group using the exponential mapping, that is, 
\begin{equation*}
\big\{exp \circ \D_t \circ exp^{-1}\big\}_{t > 0} 
\end{equation*}
are automorphism of the group $\GG$ (\cite[Section 3.1.2]{FisherRuzhansky}). As a consequence, we get the following:
if $\pi_i$, $i =1, \dots, \kappa$, denote the projection functions onto the layer $V_i$, $i =1, \dots, \kappa$, 
then, by \eqref{DIL}, we can write
\begin{equation*}
\D_{t}x = (t^{\lambda_1} \pi_1(x), \dots, t^{\lambda_\kappa}\pi_\kappa(x)),\quad x\in\R^d,\ t>0,
\end{equation*}
where $\lambda_\kappa>\cdots>\lambda_1>0$ are the weights of the dilation function $\D_t$.

We finally introduce the homogeneous norm $\|\cdot \|_{\GG} $ and the homogeneous dimension $Q$ of the group $\GG$, which are defined as
\begin{equation}\label{hom-norm-dim}
\|x\|_{\GG} = \left(\sum_{j=1}^{k}|\pi_j(x)|^{\frac{2k!}{j}} \right)^{\frac{1}{2k!}} \quad \text{and} \quad Q =\sum_{j=1}^{k} j\  \text{dim}\ V_j
\end{equation}
respectively.

\subsection{Horizontal vector fields}\label{sect:vf}
Based on the setup in \Cref{sect:Lie}, from now on we will  work on $\R^d$ and we will consider the set $\{X_1, \dots, X_m\}$ of the left-invariant vector fields associated with a basis $e_1,\ldots, e_m$  of $V_1$, that is,
\begin{equation}\label{left-inv}
X_if(x)=\frac d{dt}f(x*  t e_i)\Big|_{t=0},\quad i=1,\ldots,m,\quad f\in C^\infty(\R^d).
\end{equation}
Next, we consider the following assumption.

\begin{assumption}\label{homog}\em
We assume the family of vector fields $\{X_1, \dots, X_m\}$ homogeneous of degree $1$ w.r.t. $\D_t$, that is, 
\begin{equation}\label{def-homog}
X_i(f \circ \D_t) = t (X_i f) \circ \D_t, \quad \forall\; t > 0,\; \forall\; f \in C^{\infty}(\R^d)\; \textrm{and}\; i=1, \dots, m.
\end{equation} 
We also assume that $\{X_1, \dots, X_m\}$ satisfies the H\"ormander condition, i.e.,
\begin{equation}\label{Hor}
\text{Lie}(X_1, \dots X_m)(x)=\R^d \quad \text{for all}\,\, x \in \R^d, 
\end{equation}
where $\text{Lie}(X_1, \dots X_m)(x)$ denotes the Lie algebra generated by $\{X_1, \dots, X_m\}$ at $x \in \R^d$.
\end{assumption}

\begin{example}\label{example}
Consider the following vector fields in $\R^4$:
\begin{equation*}
X_1=\partial_{x_1}-\frac {x_2}2\partial_{x_3}-\frac{x_2^2}{12}\partial_{x_4}
\quad\mbox{and}\quad
X_2=\partial_{x_2}+\frac {x_1}2\partial_{x_3}
+\frac {x_1x_2}{12}\partial_{x_4}.
\end{equation*}
Then $X_1$ and $X_2$ provide an example of smooth vector fields with superlinear growth which are left-invariant and satisfy \Cref{homog}. Let us briefly show why.

Let $\GG$ denote the 4-dimensional Lie group  whose Lie algebra $\g$ is endowed with the following bracket rule: if $e_1,\ldots,e_4$ denote the standard orthonormal basis in $\R^4$, then for $1\leq i<j\leq 4$,
$$
[e_1,e_2]=e_3,\quad [e_2,e_3]=e_4 \quad \mbox{and} \quad [e_i,e_j]=0\mbox{ otherwise.}
$$
\def\span{\mbox{\rm Span}}
We define the dilations on $\g$ as follows:
\begin{equation}\label{dilation1}
\D_tx=t(x_1e_2+x_2e_2)+t^2x_3e_3+t^3x_4e_4,\quad t>0,\ x\in \g.
\end{equation}
Then, the stratification $\g = V_1 \oplus V_2 \oplus V_3$ holds with $V_1=\span\{e_1,e_2\}$, $V_2=\span\{e_3\}$. Hence, the associated Lie group $\GG$ is homogeneous and, since $d=4$, $m=2$ and $\kappa=3$, $\GG$ is of Engel type. Moreover, from 
Baker-Campbell-Hausdorff formula,
the group operation is
\begin{equation}\label{star1}
x*  y=
\left(
\begin{array}{c}
x_1+y_1\\
x_2+y_2\\
x_3+y_3+\frac 12 (x_1y_2-x_2y_1)\\
x_4+y_4+\frac 12 (x_2y_3-x_3y_2)
+\frac 1{12}(x_2-y_2)(x_1y_2-x_2y_1)
\end{array}
\right).
\end{equation}
Then we have:
\begin{itemize}
\item[$(i)$]  for $i=1,2$, $X_i$ is the left-invariant vector field associated with $e_i$, as an immediate consequence of the group operation \eqref{star1};
\item[$(ii)$] for $i=1,2$, $X_i$ is homogenous of degree 1 w.r.t. the family of dilations \eqref{dilation1};
\item[$(iii)$] $\{X_1,X_2\}$ fulfills the H\"ormander condition. Indeed, we have 
$$
\begin{array}{l}
[X_1,X_2]
=\partial_{x_3}+\frac{x_3}3\partial_{x_4},\smallskip\cr
[X_1, [X_1,X_2]]
=\frac 13\partial_{x_4},
\end{array}
$$
which yields $\mathrm{Lie}(X_1,X_2)(x)=\R^4$ for every $x\in\R^4$. 
\end{itemize}
\end{example}

Let us discuss some consequences of \eqref{left-inv} and \eqref{def-homog} (the H\"ormander condition \eqref{Hor} will play a crucial role but we  will see that in \Cref{sect:subRiem}).
We write every $X_i$ in its coordinate form:
\begin{equation*}
X_i = \sum_{k=1}^{d} X_{i, k}(x) \partial_{x_k}, \quad i=1,\ldots,m.
\end{equation*}
As in \Cref{example}, it is a general fact  that, under \eqref{left-inv} and \eqref{def-homog}, all 
$X_{i, k}$ are polynomial functions that are homogeneous of degree $\lambda_{k} - 1$ w.r.t. the dilations $\D_{t}$ and the polynomial $X_{i, k}$ depends only on those variables $x_j$ such that $\lambda_{j} \leq \lambda_k -1$, where $\lambda_1<\cdots<\lambda_m$ are the weights appearing in \eqref{DIL} (see \cite[Proposition 1.3.5 and Remark 1.3.7]{BLU}).
Consequently, 
\begin{equation}\label{divXi}
\div_{\R^d}(X_{i})=0,\quad i=1,\ldots,m,
\end{equation}
where $\div_{\R^d}$ denotes the standard divergence operator in $\R^d$: 
\begin{equation}\label{div_euclidea}
\div_{\R^d} u = \partial_1 u_1 + \dots + \partial_d u_d,
\end{equation}
where $u:\R^d\to \R^d$ 
and $u_i$ denotes the $i$-th component of $u$ for $i=1, \dots, d$.
We recall that, in general, the formal adjoint operator of a vector field $X$ is $X^\star=-X-\div_{\R^d}(X)$. Let us stress that, hereafter, all formal adjoint operators must be understood w.r.t. the Lebesgue measure. Thus, following \eqref{divXi}, for $i=1,\ldots,m$,
\begin{equation}\label{adjoint}
X^\star_i=-X_i.
\end{equation}

\begin{remarks}\label{waterfall}\em
We observe that, in what follows, \eqref{def-homog} can be relaxed requiring that the vector fields are $\lambda \geq 1$ homogeneous w.r.t. $\D_t$, that is, $	X_i(f \circ \D_t) = t^\lambda (X_i f) \circ \D_t$, $f \in C^{\infty}(\R^d)$ and $i=1, \dots, m$ (see \cite[Remark 3.1.8 and Lemma 3.1.14]{FisherRuzhansky}). In this case, the polynomial $X_{i, k}$ associated with $X_i$, $i=1,\ldots,m$, is $\lambda_k - \lambda$ homogeneous w.r.t. $\D_t$ and  depends only on those variables $x_j$ such that $\lambda_{j} \leq \lambda_k - \lambda$ (see \cite[Proposition 1.3.5]{BLU}). Finally, we still have that the formal adjoint operator of $X_i$ equals $-X_i$.  
\end{remarks}

From now on, we will always consider a family $\{X_1, \dots, X_m\}$ of smooth horizontal left-invariant vector fields  satisfying  \Cref{homog}. 

\subsection{Sub-Riemannian structure}\label{sect:subRiem}

We endow $\R^d$ with the, so-called, Carnot-Carat\'e\-odo\-ry distance constructed as follows. First, given $a, b \in \R$ we say that an absolutely continuous curve $\gamma: [a,b] \to \R^d$ is horizontal if there exist
$\alpha=(\alpha_1, \dots, \alpha_m)$, with 
$\alpha_1, \ldots, \alpha_m \in L^1(a, b)$, such that 
\begin{equation*}
\dot \gamma(t) = \sum_{j=1}^{m} \alpha_j(t) X_j(\gamma(t)), \quad \text{a.e.}\,\, t \in [a,b]
\end{equation*} 
and the length of $\gamma$ is defined as
\begin{equation*}
\ell(\gamma) = \int_{a}^{b} |\alpha(t)|\ dt,
\end{equation*}
where, hereafter, $|\cdot|$ denotes the standard Euclidean norm. From the H\"ormander condition, a well-known result by Chow \cite{Barilari} states that any two points on $\R^d$ can be connected by an horizontal curve. Hence, the definition of Carnot-Carath\'eodory distance is well-posed and given by
\begin{equation}\label{dSR}
d_{\SR}(x, y) = \inf\{\ell(\gamma): \gamma\,\,\text{is an horizontal curve joining}\,\, x\,\,\text{to}\,\, y\}. 
\end{equation}
One can prove a variational interpretation of the above distance as
\begin{equation*}
d_{\SR}(x, y) = \inf\;\{T > 0 :\, \exists\; \gamma:[0,T]\to \R^d, \text{ horizontal and joining}\,\, x\,\, \text{to}\,\, y\,\, \text{with}\,\, |\gamma(t)| \leq 1\}.
\end{equation*}
When $\kappa > 1$, $\kappa$ denoting the degree of the stratification \eqref{direct}, the Carnot-Carath\'eodory distance is not Lipschitz equivalent to the Euclidean one, as proved in \cite[Theorem 10.64, Theorem 10.67]{Barilari}. Indeed, it is well-known that, for any $x \in \R^d$ and for any neighborhood $U_x$ of $x$ there exists $r(x)\in\N$ such that for any $y\in U_x$ one has
\begin{equation*}
\frac{1}{C}|x-y| \leq d_{\SR}(x,y) \leq C|x-y|^{r(x)},
\end{equation*}
where $r(x) \in \N$ is called the nonholonomic degree at $x \in \R^d$ and is given by the minimum of the degrees of the iterated brackets occurring to fulfill the H\"ormander condition at $x$. 

Using the sub-Riemannian distance one can define a norm on $\R^d$ tailored from the Lie group, that is,
\begin{equation*}
\|x\|_{\SR} = d_{\SR}(0,x). 
\end{equation*}

We finally recall that, from the homogeneity of the vector fields $X_1, \dots, X_m$ and the stratification of $\R^d$, $\|\cdot\|_{\SR}$ is equivalent to the homogeneous norm $\|\cdot\|_{\GG}$ in \eqref{hom-norm-dim}
(for details, see  \cite[Section 5.1]{BLU}).

\subsection{Horizontal functional spaces}

 
In the following, for $n\geq 1$, we consider the multi-index $J=(j_1,\ldots,j_n)\in\{1,\ldots,m\}^n$ of length $|J|=n$  and we define $X^J=X_{j_1}\cdots X_{j_n}$. We allow the case $n=0$, that is the void multi-index $J$ of zero length, and we set $X^J=\mbox{\rm{Id}}$.

 We introduce $C^{0}(\R^d)$ as the set of continuous (possibly unbounded) functions on $\R^d$ and we associate the norm
$\| u\|_{C^{0}(\R^d)}= \| u\|_{L^{\infty}(\R^d)}$. Moreover for $\alpha \in (0,1]$, given $u\in C^{0}(\R^d)$ and $U\subset \R^d$ we define the seminorm 
\begin{equation*}
[u]_{C_{\GG}^{0,\alpha}(U)} = \sup_{\substack{ x, y \in U \\ x \not= y}} \frac{|u(x) - u(y)|}{d_{\SR}(x, y)^{\alpha}},
\end{equation*}
where $d_{\SR}$ is the Carnot-Carath\'eodory distance defined in \eqref{dSR}.
We introduce 
\begin{equation*}
C^{0,\alpha}_{\GG}(\R^d)= \left\{u \in C^0(\R^d): [u]_{C_{\GG}^{0,\alpha}(U)} < \infty, {\textrm{ for every compact }} U\subset \R^d  \right\}
\end{equation*}
and the corresponding norm
$$
\|u\|_{C_{\GG}^{0,\alpha}(\R^d)}:=\| u\|_{C^{0}(\R^d)}+ [u]_{C_{\GG}^{0,\alpha}(\R^d)}.
$$
Similarly, for $r \in \N$ and $\alpha \in (0,1]$ we define 
\begin{equation}\label{Holder_space}
C^{r,\alpha}_{\GG}(\R^d)= \left\{u \in C^0(\R^d): X^{J}u\in C^{0,\alpha}_{\GG}(\R^d), \forall\, J\,:\, |J| \leq r  \right\}
\end{equation}
and the corresponding norm
$$
\|u\|_{C_{\GG}^{r,\alpha}(\R^d)}:=\sum_{0 \leq |J| \leq r} \left(\| X^{J}u\|_{{C^0}(\R^d)}+ [X^{J}u]_{C^{0,\alpha}_{\GG}(\R^d)}\right).$$
The spaces $C^{r, \alpha}_{\GG}(\R^d)$ with the norm $\|\cdot\|_{C_{\GG}^{r,\alpha}(\R^d)}$
are Banach spaces for any $r \in \N$ and any $\alpha \in (0,1]$.

Next, we recall the definition of horizontal Sobolev spaces. Let $r \in \N$ and $1 \leq p \leq \infty$. We define the space 
\begin{equation}\label{Sobolev_space}
W^{r, p}_{\GG}(\R^d) = \left\{u \in L^p (\R^d) : X^J u \in L^p(\R^d), \,\, \forall\, J\,:\, |J| \leq r  \right\}
\end{equation}
endowed with the norm 
\begin{equation*}
\| u \|_{W^{r,p}_{\GG}(\R^d)} = \left(\sum_{|J| \leq r} \int_{\R^d} |X^J u(x)|^{p}\ dx\right)^{\frac{1}{p}},
\end{equation*}
for $r\in [1,\infty)$, $1 \leq p < \infty$. For $p=\infty$ we define the norm
	\begin{equation}\label{Sobolev_space1}
	\| u \|_{W^{r,\infty}_{\GG}(\R^d)} = \sum_{|J| \leq r}  \|X^J u(x)\|_{\infty}.
	\end{equation}
As usual, we set $H^1_\GG(\R^d)=W^{1,2}_{\GG}(\R^d)$. 
Given an open subset $\Omega$ of $\R^d$ we define $C^{r, \alpha}_{\GG}(\Omega)$, resp. $W^{r, p}_{\GG}(\Omega)$ with $1\leq p\leq \infty$,  by replacing $\R^d$ with $\Omega$ in \eqref{Holder_space}, resp. in \eqref{Sobolev_space}-\eqref{Sobolev_space1}. And we set $H^1_\GG(\Omega)=W^{1,2}_{\GG}(\Omega)$. Finally, we denote by $C^{r, \alpha}_{\GG, \text{loc}}(\R^d)$ and $W^{r, p}_{\GG, \text{loc}}(\R^d)$ the ``local horizontal'' H\"older and Sobolev space, respectively, as usual: $u\in C^{r, \alpha}_{\GG, \text{loc}}(\R^d)$, resp. $u\in W^{r, p}_{\GG, \text{loc}}(\R^d)$, if and only if the restriction of $u$ on any open and bounded subset $\Omega$ belongs to $C^{r, \alpha}_{\GG}(\Omega)$, resp. to $W^{r, p}_{\GG}(\Omega)$.

\subsection{The subelliptic Fokker-Planck equation}

For $u:\R^d\to \R$, we define the horizontal gradient and the horizontal Laplacian of $u$ as
\begin{equation}\label{DeltaG}
\nabla_{\GG} u= (X_1 u, \dots, X_m u)^{T} \in \R^m\quad\mbox{and}\quad
\Delta_{\GG} u = \sum_{i=1}^{m} X^2_i u \in \R. 
\end{equation}
respectively.
%
%
We also consider the horizontal divergence operator, that is, for any vector-valued function $u: \R^d \to \R^m$,

\begin{equation}\label{divG}
\div_{\GG} u = X_1 u_1 + \dots + X_m u_m
\end{equation}
where $u_i$ denotes the $i$-th component of $u$ for $i=1, \dots, m$. 

\smallskip
We are now ready to introduce the equation whose analysis we are interested in:
for $s \geq 0 $ and $x \in \R^d$, we consider  the subelliptic Fokker-Planck equation 
\begin{equation}\label{FP}
\begin{cases}
\partial_t \rho(t, y) - \Delta_{\GG} \rho(t, y) - \div_{\GG}(\beta(t, y) \rho(t, y)) = 0, & (t, y) \in (s, \infty) \times \R^d
\\
\rho(t, y)dy\rightharpoonup
\delta_{\{x\}}(dy)\mbox{ as }t \downarrow s
\end{cases}
\end{equation}
where $\beta: [0,\infty) \times \R^d \to \R$ is a smooth vector field (more precise assumptions will be specified later) and, hereafter, the notation $\rightharpoonup$ denotes the weak convergence. 


\begin{definition}\label{def1}
Let $s\geq 0$ and $x\in\R^d$ be fixed. We say that $\rho$ is a {\bf\em classical solution} to \eqref{FP} if $\rho \in C((s,\infty); C^{2}_{\GG, loc}(\R^{d}))$,  $\partial_t\rho\in C((s,\infty); C(\R^{d}))$ and $\rho$ solves equation \eqref{FP} pointwise.
\end{definition}

In order to obtain analytical results for the solution $\rho$ to \eqref{FP} we need to introduce the following notion of weak solution.

We define $L^2_{loc}((s,\infty); H^{1}_{\GG}(\R^d))$ as the set of functions belonging to $L^2((s,T); H^{1}_{\GG}(\R^d))$ for every $T>s$.

\begin{definition}\label{def2}
Let $s\geq 0$ and $x\in\R^d$ be fixed. We say that $\rho \in C((s, \infty); H^{1}_{\GG}(\R^d))$ with $\partial_t \rho \in L^2_{loc}((s,\infty); H^{1}_{\GG}(\R^d))$ is an {\bf energy solution} to \eqref{FP} if for any test function  $\varphi \in C((s,\infty); H^{1}_{\GG}(\R^d))$ with $\partial_t \varphi \in L^2_{loc}((s,\infty); H^{1}_{\GG}(\R^d))$ then for every $t \geq s$ it holds:
\begin{multline}\label{energy}
\int_{\R^d}\rho(t,y)\varphi(t,y)\, dy- \varphi(s,x) -
\iint_{(s,t)\times \R^d}\partial_u\varphi (u,y)\rho (u,y) dudy \\ 
+\iint_{(s,t)\times \R^d} \big(\langle \nabla_{\GG}\varphi (u,y) , \nabla_{\GG}\rho (u,y) \rangle+\langle \beta(u,y), \nabla_{\GG} \varphi(u, y)\rangle \rho(u, y) \big)\, dudy =0.
\end{multline}
\end{definition}

\begin{remarks}\em
\begin{enumerate}
	\item 
The weak formulation in \eqref{energy} follows by multiplying equation \eqref{FP} by a test function $\varphi$ satisfying the requests in  \Cref{def2} and then by integrating. In fact, when following this plan, the use of the classical integration by parts formula w.r.t. the space variable would give the adjoint operator of any the vector field $X_i$. But, as already observed in \eqref{adjoint}, under our hypotheses this adjoint operator coincides with $-X_i$, and the integration by parts formula actually returns \eqref{energy}.

\item
Sometimes we will consider problem \eqref{FP} with initial datum $\rho_s$ different from the Dirac delta. Hence, in this case, we say that $\rho$ is an energy solution of 
\begin{equation*}
\begin{cases}
\partial_t \rho(t, y) - \Delta_{\GG} \rho(t, y) - \div_{\GG}(\beta(t, y) \rho(t, y)) = 0, & (t, y) \in (s, \infty) \times \R^d
\\
\rho(t, y)dy\rightharpoonup
\rho_s(dy)\mbox{ as }t \downarrow s
\end{cases}
\end{equation*}
if for any test function  $\varphi \in C((s,\infty); H^{1}_{\GG}(\R^d))$ with $\partial_t \varphi \in L^2_{loc}((s,\infty); H^{1}_{\GG}(\R^d))$  then for every $t\geq s$ it holds
\begin{multline*}
\int_{\R^d}\rho(t,y)\varphi(t,y)\, dy- \int_{\R^d} \varphi(s,y)\;\rho_s(dy) -
\iint_{(s,t)\times \R^d}\partial_u\varphi (u,y)\rho (u,y) dudy \\ 
+\iint_{(s,t)\times \R^d} \big(\langle \nabla_{\GG}\varphi (u,y) , \nabla_{\GG}\rho (u,y) \rangle+\langle \beta(u,y), \nabla_{\GG} \varphi(u, y)\rangle \rho \big)\, dudy =0.
\end{multline*}
\end{enumerate}

\end{remarks}

%
%
%
%

\section{Analysis of the Fokker-Planck equation}\label{PDE}

We now consider the operator
\begin{equation}\label{L}
L = \sum_{i=1}^{m} X_i^2 + X_0=\Delta_{\GG}+X_0,
\end{equation}
where $X_0$ denotes a smooth drift. $L$ turns out to be the infinitesimal generator associated with the diffusion process $\xi$ defined as the solution to the stochastic differential equation (SDE)
\begin{equation}\label{diffusion0}
d\xi_t = X_0(\xi_t)\;dt + \sqrt{2} \sum_{i=1}^{m} X_i(\xi_t) \circ dB^i_t
\end{equation}
where $B=(B^1, \dots, B^d)$ is a standard Brownian motion on $\R^d$ and $\circ$ denotes the Stratonovich integral. As the H\"ormander condition \eqref{Hor} does hold, it is well known (see e.g. \cite[Theorem 2.3.3]{Nua06}) that if $X_0,X_1,\ldots,X_m$ have bounded derivatives of all orders then the transition density $q(s,t,x,y)$ exists and is infinitely differentiable. Moreover, for every fixed $s\geq 0$ and $x\in\R^d$, $(t,y)\mapsto q(s,t,x,y)$ solves the Fokker-Planck equation
\begin{equation}\label{prFP}
\begin{cases}
\partial_t q(s,x,t,y) - L^{\star} q(s,x,t,y) = 0, \quad (t, y) \in (s, \infty) \times \R^d
\\
q(s,x,t,y)dy\rightharpoonup
\delta_{\{x\}}(dy)\mbox{ as }t \downarrow s
\end{cases}
\end{equation}
where the above limit has to be intended weakly (in law)
and $L^{\star}$ denotes the adjoint operator of $L$ (we will discuss it deeply later on).

Our aim is to set a link between the above ``probabilistic'' Fokker-Planck equation \eqref{prFP} and the ``analytical'' one defined in  \eqref{FP}. As a consequence, we will be able to interpret the solution $\rho$ to \eqref{FP} as the density of a specific probability law, based on the solution $\xi$ to the SDE \eqref{diffusion0} with a drift $X_0$ that might depend also on the time variable, as we will see later on. We stress that the main difference with the existing literature on the argument is the polynomial growth of the vector fields involved in the infinitesimal generator \eqref{L}. Indeed, as for instance in \cite{Nua06} and references therein, it is classically assumed that the vector fields $\{X_1, \dots, X_m\}$ to be at least Lipschitz continuous and, thus, to have sublinear growth.


This section is devoted to the analysis of the Fokker-Planck equation \eqref{FP} from a PDEs viewpoint and this will be the starting point for the probabilistic interpretation of the dynamics described by such an equation.  

Before proceeding with the first main result of this work, we introduce the Fortet-Mourier distance $d_0$ tailored for homogeneous Lie groups: for any two probability  measures $m$ and $m'$ on $\R^d$,
\begin{equation}\label{dFM}
d_{0}(m ,m') = \sup\left\{\int_{\R^d} f(x)\big(dm(x)-dm'(x)\big) : f \in C^{0,1}_{\GG}(\R^d)\, \text{s.t.}\,\,  \|f\|_{C^{0,1}_{\GG}(\R^d)} \leq 1 \right\}. 
\end{equation}

Next we denote by $[f]_{-}$ the negative part of $f$.


\begin{theorem}\label{thm:ex_FP1}
For $s\geq 0$, let $\rho_s(\cdot) \in L^\infty(\R^d)\cap L^1(\R^d) $ be a nonnegative function such that $\int_{\R^d}\rho_s(y) dy=1$ and, for some $\alpha\in(0,1]$, let  $\beta \in C([s,\infty); (C^{1,\alpha}_{\GG}(\R^{d}))^m)$ satisfy
\begin{equation}\label{negative}
\sup_{t\in[s,T]}\|\beta(t,\cdot)\|_{(L^\infty(\R^{d}))^m} <+\infty \;\; \textrm{and} \; \sup_{t\in [s,T]}\|[\div_{\GG}\beta(t,\cdot)]_{-}\|_{L^{\infty}(\R^d)} < \infty
\end{equation}
for any $T > s$. 
Then, the equation
\begin{equation}\label{FP_regular}
\begin{cases}
\partial_t \rho(t, y) - \Delta_{\GG} \rho(t, y) - \div_{\GG}(\beta(t, y) \rho(t, y)) = 0, & (t, y) \in (0, \infty) \times \R^d
\\
\rho(s, y) = \rho_s(y), & y \in \R^d
\end{cases}
\end{equation}
has a unique energy solution  $\rho \in C((s, \infty); H^{1}_{\GG}(\R^d))$ with $\partial_t\rho \in L^2_{loc}((s,\infty); H^{1}_{\GG}(\R^d))$.

Moreover, the following properties hold. 
\begin{itemize}
\item[($i$)] $0\leq \rho(t, y)\leq \|\rho_s(\cdot)\|_{L^\infty(\R^d)}$ for any $(t, y) \in [s, \infty) \times \R^d$ and $\int_{\R^d}\rho(t, y)\, dy=1$ for every $t\geq s$.
\item[($ii$)]  If 
\[
\sup_{t\in[s,T]}\|\beta(t,\cdot)\|_{(C^{1}_{\GG}(\R^{d}))^m} <+\infty
\]
for any $T > s$, then $\rho \in C((s, \infty); C^{2, \alpha}_{\GG, \textrm{loc}}(\R^d))$ is a classical solution of \eqref{FP_regular}.
\item[($iii$)] The map $t\mapsto \rho(t,dy)=\rho(t,y)dy$, $t\geq s$, is locally $\frac 12$-H\"older continuous  w.r.t. the Fortet -Mourier distance: for every $t_1,t_2\in[s,T]$,
\begin{equation*}
	d_0 (\rho(t_1,\cdot),\rho(t_2,\cdot)) \leq 4C_{s,T}(\beta)|t_1-t_2|^{\frac{1}{2}}, 
	\end{equation*}
	where $C_{s,T}(\beta)=1+\displaystyle{\sup_{r \in [s, T]}} \|\beta(r, \cdot)\|_{(L^\infty(\R^d))^m}$.

\end{itemize}
\end{theorem}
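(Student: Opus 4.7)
The plan is to combine the Lions--Magenes variational framework for the existence and uniqueness of the energy solution, Gr\"onwall-type energy estimates tailored to the identity $X_i^\star=-X_i$ (which is the analytical crux under \Cref{homog}, see \eqref{adjoint}), a sub/supersolution comparison for the pointwise bounds, a horizontal cutoff argument for mass conservation, H\"ormander hypoellipticity plus subelliptic Schauder theory for the classical regularity in (ii), and a scale-$\epsilon$ group mollification optimised against the time increment for the H\"older estimate in (iii). I would cast \eqref{FP_regular} in the Gelfand triple $H^1_\GG(\R^d)\hookrightarrow L^2(\R^d)\hookrightarrow (H^1_\GG(\R^d))^\star$ with the family of bilinear forms
\begin{equation*}
a(t;u,v)=\int_{\R^d}\langle\nabla_\GG u,\nabla_\GG v\rangle\,dy+\int_{\R^d} u\,\langle \beta(t,\cdot),\nabla_\GG v\rangle\,dy,
\end{equation*}
which are continuous uniformly on $[s,T]\times H^1_\GG\times H^1_\GG$ by the $L^\infty$ bound on $\beta$ and satisfy the G\aa rding inequality $a(t;u,u)\geq \tfrac12\|\nabla_\GG u\|_{L^2}^2-C(\|\beta\|_\infty)\|u\|_{L^2}^2$ by Cauchy--Schwarz. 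Lions' theorem then delivers a unique variational solution, which a density argument on $\rho_s\in L^\infty\cap L^1$ together with the extra regularity $\beta\in C^{1,\alpha}_\GG$ upgrades to an energy solution in the stronger class of \Cref{def2}.

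For the quantitative content of (i), positivity is obtained by taking $w=(-\rho)^+$: it solves a sub-solution inequality of the same form with vanishing initial datum, and the energy identity combined with \eqref{negative} and Gr\"onwall forces $w\equiv 0$. The uniform bound $\rho\leq \|\rho_s\|_\infty$ is obtained analogously by truncating at height $M=\|\rho_s\|_\infty$ and running a Stampacchia--De Giorgi energy estimate for $(\rho-M)^+$, where the zero-order term generated by $M\,\mathbf{1}_{\rho>M}\,\div_\GG\beta$ is controlled precisely thanks to \eqref{negative}. Mass conservation is obtained by testing against a sequence of smooth horizontal cutoffs $\chi_R$ equal to $1$ on the sub-Riemannian ball $B_{\SR}(0,R)$, with $\|\nabla_\GG \chi_R\|_\infty\leq C/R$ available by the $1$-homogeneity of the $X_i$ w.r.t.\ the dilations of $\GG$, and letting $R\to\infty$; the tail contributions vanish by the $L^1\cap L^\infty$ bounds on $\rho$ and $\|\beta\|_\infty<\infty$. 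Uniqueness is the same Gr\"onwall argument applied to the difference $w=\rho_1-\rho_2$ of two energy solutions.

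Part (ii) is a bootstrap: under the strengthened $C^1_\GG$ hypothesis on $\beta$, the equation is of H\"ormander type (using \eqref{Hor}) with $C^{1,\alpha}_\GG$ coefficients, so the interior horizontal Schauder estimates of \cite{Bramanti_2010} applied to the already available bounds on $\rho$ yield $\rho\in C((s,\infty);C^{2,\alpha}_{\GG,\mathrm{loc}}(\R^d))$ and the pointwise validity of \eqref{FP_regular}. For (iii), fix $f\in C^{0,1}_\GG(\R^d)$ with $\|f\|_{C^{0,1}_\GG(\R^d)}\leq 1$ and construct an intrinsic mollification $f_\epsilon$ (via convolution on $\GG$ with a dilated bump, or equivalently the subelliptic heat semigroup at time $\epsilon^2$) satisfying
\begin{equation*}
\|f-f_\epsilon\|_\infty\leq \epsilon,\qquad \|\nabla_\GG f_\epsilon\|_\infty\leq 1,\qquad \|\Delta_\GG f_\epsilon\|_\infty\leq C\epsilon^{-1}.
\end{equation*}
Inserting $f_\epsilon$ as a time-independent test function in \eqref{energy}, integrating by parts via $X_i^\star=-X_i$, and using $\int\rho(t,\cdot)\,dy=1$ together with $\|\beta\|_\infty<\infty$ yields
\begin{equation*}
\left|\int_{\R^d}f_\epsilon\,\bigl(\rho(t_1,\cdot)-\rho(t_2,\cdot)\bigr)\,dy\right|\leq \bigl(\|\Delta_\GG f_\epsilon\|_\infty+\|\beta\|_\infty\|\nabla_\GG f_\epsilon\|_\infty\bigr)|t_1-t_2|.
\end{equation*}
Adding the approximation error $2\epsilon$ and choosing $\epsilon=|t_1-t_2|^{1/2}$ gives the announced $\tfrac12$-H\"older bound with constant $4C_{s,T}(\beta)$ after tracking numerical constants.

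The main obstacle lies in (iii): a naive Euclidean mollification of $f$ would produce $\|\Delta_\GG f_\epsilon\|_\infty\sim \epsilon^{-r(x)}$ with $r(x)$ the non-holonomic degree, which would only yield the much worse H\"older exponent $1/(1+r(x))$; securing the sharp $\tfrac12$-exponent requires the intrinsic group-adapted mollifier, and this step crucially uses the $1$-homogeneity of the $X_i$ w.r.t.\ the dilations of $\GG$. A secondary difficulty, pervading the whole argument, is the lack of compactness of $\R^d$ and the polynomial growth of the coordinate coefficients $X_{i,k}$: this is tamed by the $L^1\cap L^\infty$ bounds from (i), the horizontal cutoff scheme $\chi_R$ above, and the intrinsic identity $X_i^\star=-X_i$ which bypasses having to manipulate the Euclidean divergence of $X_i$ and hence the polynomially growing coefficients.
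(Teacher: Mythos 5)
Your proposal is correct and follows essentially the same route as the paper: J.L. Lions' theorem in the Gelfand-triple/variational framework for existence, the Gr\"onwall (exponential-weight) argument exploiting the divergence condition in \eqref{negative} for uniqueness, dilation-adapted horizontal cutoffs for mass conservation, subelliptic Schauder estimates of Bramanti--Brandolini type for ($ii$), and the group-convolution mollifier with $\|\Delta_{\GG} f_{\eps}\|_{\infty}\lesssim \eps^{-1}$ optimised at $\eps=|t_1-t_2|^{1/2}$ for ($iii$). The only point you gloss over is that the dominated-convergence passage $R\to\infty$ in the mass-conservation step already presupposes $\rho(t,\cdot)\in L^1(\R^d)$, which the paper secures beforehand from the Dirichlet approximations on the balls $B^{\GG}_R$ together with Hopf's lemma (the same truncated problems from which it also gets the pointwise bounds by comparison, where you instead propose a Stampacchia truncation --- a legitimate, essentially equivalent alternative).
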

\proof 

We divide the proof in several steps. We, first, show the existence of an energy solution appealing to J.L. Lions Theorem (e.g., \cite[Theorem X.9]{Brezis}) and, then, we show that if 
\begin{equation*}
\sup_{t\in [s,T]}\|[\div_{\GG}\beta(t,\cdot)]_{-}\|_{L^{\infty}(\R^d)} < \infty
\end{equation*}
for any $T > s$ then such an energy solution is also unique. Then, we proceed with the proof of ($i$), ($ii$) and ($iii$). 

\medskip
\fbox{Existence.} Set
\[
H=C([s, \infty); H^1_{\GG}(\R^d)) \cap L^2_{loc}((s,\infty); H^{1}_{\GG}(\R^d))
\]
and for $t \geq s$ define the bilinear form 
\begin{equation*}
B: H \times H  \to \R
\end{equation*}
 by
\begin{multline*}
B(\varphi, \psi) = -\iint_{(s,t)\times \R^d}\partial_u\varphi (u,y)\psi (u,y) dudy \\ 
+\iint_{(s,t)\times \R^d} \big[\langle \nabla_{\GG}\varphi (u,y) , \nabla_{\GG}\psi (u,y) \rangle+\langle \beta(u,y), \nabla_{\GG} \varphi(u, y)\rangle \psi \big]\, dudy.
\end{multline*}
Then, our assumptions \eqref{negative} allow us to develop the same arguments in  \cite[Lemma 3.5]{Federica} in order to deduce the coercivity of the bilinear form $B$. Consequently, J.L. Lions Theorem \cite[Th\'eor\`em X.9]{Brezis} can be applied and we obtain that there exists an energy solution of \eqref{FP_regular}. In particular, rewriting the equation as a subelliptic heat equation of the form 
\begin{equation*}
\begin{cases}
\partial_t \rho(t, y) - \Delta_{\GG} \rho(t, y) = \div_{\GG}(\beta(t, y) \rho(t, y)), & (t, y) \in (s, \infty) \times \R^d
\\
\rho(s, y) = \rho_s(y), & y \in \R^d
\end{cases}
\end{equation*}
we also obtain, from \cite[Theorem 18]{Acta}, that such a solution satisfies  $X_i X_j \rho \in L^{2} ((s, \infty) \times \R^d)$ and, consequently, $\partial_t \rho \in L^2_{loc}((s,\infty); H^{1}_{\GG}(\R^d))$.

\medskip
\fbox{Uniqueness.}  Let $\rho_1$, $\rho_2$ be two energy solutions to \eqref{FP_regular}, with the same initial datum at time $s$,  and set 
\[
\mu= \rho_1 - \rho_2.
\]
 Then, for any $\varphi\in C([s,\infty) ;H^{1}_{\GG}(\R^d))$ with $\partial_t\varphi\in L^2_{loc}((s,\infty); H^{1}_{\GG}(\R^d))$ 
 we have
\begin{multline*}
\int_{\R^d}\mu(x,t)\varphi(x,t)\, dx - \iint_{(s,t)\times \R^d}\partial_t\varphi (t, x)\mu (t, x) dxdt 
\\ +\iint_{(s,t)\times \R^d}\nabla_{\GG}\varphi (t, x)\left(\nabla_{\GG}\mu (t, x)+\beta (t, x)\mu\right)\, dxdt =0
\end{multline*}
where we have used that $\mu(s, x) = 0$. Then, the change of unknown 
\begin{equation*}
\widetilde\mu = e^{-\lambda t} \mu
\end{equation*}
leads to the equation
\begin{equation}\label{exp_sol}
\begin{cases}
\partial_t \widetilde\mu(t, y) - \Delta_{\GG} \widetilde\mu(t, y) - \div_{\GG} (\beta(t, y)\widetilde\mu(t, y)) + \lambda \widetilde\mu(t, y)=0, & (t, y) \in (s, \infty) \times \R^d
\\
\widetilde\mu(s, x) = 0, & x \in \R^d. 
\end{cases} 
\end{equation}
Hence, using $\widetilde\mu$ as a test function for \eqref{exp_sol}, for $t \geq s$ we obtain
\begin{align*}
0 =\ & \int_{s}^{t} \int_{\R^d} \big(\partial_t \widetilde\mu(r, y) - \Delta_{\GG} \widetilde\mu(r, y) - \div_{\GG}(\beta(r, y)\widetilde\mu(r, y)) + \lambda \widetilde\mu(r, y)  \big)\widetilde\mu(r, y) dydr
\\
=\ & \frac{1}{2} \int_{s}^{t} \int_{\R^d} \left(\partial_r \widetilde\mu(r, y)^2 + \left\langle \beta(r, y) ,\nabla_{\GG}  \left(\widetilde\mu(r, y)^2\right) \right\rangle + |\nabla_{\GG} \widetilde\mu(r, y)|^{2} + 2 \lambda \widetilde\mu(r, y)^2\right)\; dydr.
\end{align*}
We now fix any $T>s$ and we choose
\begin{equation*}
\lambda > \frac{1}{2}\sup_{t\in[s,T]}\|[\div_{\GG}\beta(t,\cdot)]_{-}\|_{L^{\infty}(\R^d)}.
\end{equation*}
 Then for $t\in[s,T]$ we get
\begin{align*}
0\geq\ & \frac{1}{2} \int_{\R^d} \widetilde\mu(t, y)^2\;dy + \left(\lambda - \frac{1}{2}\sup_{z\in [s,T]}\|[\div_{\GG}\beta(z,\cdot)]_{-}\|_{L^{\infty}(\R^d)} \right)\int_{s}^{t} \int_{\R^d} \widetilde\mu(r, y)^2\;dydr.
\end{align*}
This gives $\widetilde\mu(\cdot, y)\equiv0$ on the time interval $[s,T]$, which yields the uniqueness by construction. 

%
%

\medskip
\fbox{Proof of ($i$).} Consider the approximation of problem \eqref{FP_regular} by the same Cauchy problem over the ball $B^{\GG}_R$ centered at $0$ with radius $R$ w.r.t. $d_{\SR}$, with zero Dirichlet condition. First, we observe that by standard comparison principle the solution $\rho_R$ to the restricted problem satisfies
\begin{equation*}
0 \leq \rho_R(t, y) \leq \|\rho_s(\cdot)\|_{L^{\infty}(\R^d)}
\end{equation*}
and, thus, $0 \leq \rho(t, y) \leq \|\rho_s(\cdot)\|_{L^{\infty}(\R^d)}$. 
Next, we integrate the equation on $[s,\infty) \times B^{\GG}_R$, we use the divergence theorem and we note that, by the classical Hopf' Lemma,  $\partial\rho_R/\partial \nu\leq 0$ where $\nu$ is the outward pointing normal to $\partial B^{\GG}_R$. Hence,
we get that $\rho_R(t, \cdot) \in L^1(B^{\GG}_R)$ independently of $R \geq 0$ for any $t \in (s, \infty)$ and, letting $R \uparrow \infty$, we deduce $\rho(t, \cdot) \in L^1(\R^d)$ for any $t \in (s, \infty)$. 

Let us now consider a function $\xi \in C^{\infty}_{c}(\R^d)$ such that $\xi(x) = 1$ for any $x \in B^{\GG}_1$ and $\xi(x) = 0$ for any $x \in \R^d \backslash B^{\GG}_2$, and, for $R>0$, define $\xi_R(x) = \xi\left(\D_{\frac{1}{R}} (x)\right)$, $\D$ denoting the group dilatations.  Hence, being $\xi_R$ smooth we can use $\xi_R$ as a test function for the Fokker-Planck equation in \eqref{FP_regular} as in \Cref{def2}. Then, we get
\begin{multline*}
\int_{\R^d} \rho(t, y)\xi_R(y)\ dy + \iint_{(s, t) \times \R^d} \big(-\Delta_{\GG} \xi_R(y) + \beta(r, y) \nabla_{\GG}\xi_{R}(y) \big)\rho(r, y)\ dr dy  \\ = \int_{\R^d} \rho_s(y)\xi_R (y)\ dy. 
\end{multline*}
Recalling that by \Cref{homog} we have  
\begin{equation*}
\Delta_{\GG} \xi_R(y) = \frac{1}{R^2}\big(\Delta_{\GG} \xi\big)\left(\D_{\frac{1}{R}} (y)\right) \quad \text{and} \quad \nabla_{\GG}\xi_R(y) = \frac{1}{R} \big(\nabla_{\GG}\xi\big)\left(\D_{\frac{1}{R}} (y)\right),
\end{equation*}
by dominated convergence theorem as $R \uparrow \infty$, we conclude
\begin{equation*}
\int_{\R^d} \rho(t, y)\ dy = \int_{\R^d} \rho_s(y)\ dy, \quad \forall\; t \geq s.
\end{equation*}

\medskip
\fbox{Proof of ($ii$).} Consider, again,  the approximation of problem \eqref{FP_regular} by the same Cauchy problem over $B^{\GG}_{\eps}$ with zero Dirichlet condition. We claim that the solution $\rho_R$ to such a truncated problem satisfies $\rho_R\in C([s,\infty)\times B^{\GG}_{\eps})$ and that, for every domain $\Omega\subset (s,\infty)\times B^{\GG}_{\eps}$, there exist a constant $K(\Omega, R)$ (depending on the assumptions, on $\Omega$ and on $R$) and a constant $K'(\Omega)$ (depending on the assumptions and on $\Omega$), such that
\begin{equation}\label{eq:claim1}
\|\rho_R\|_{C^{2,\alpha}_{\GG}(\Omega)}\leq K(\Omega,R)\quad\textrm{and}\quad
\|\rho_R\|_{C^{2,\alpha}_{\GG}(\Omega)}\leq K'(\Omega) 
\end{equation}
for $R$ sufficiently large (recall that $\alpha$ is the H\"older exponent of $\nabla_{\GG}\beta$).
Indeed, consider a sequence $\{\beta_n\}_n$ of drifts such that $\beta_n\in C^\infty$ and $\beta_n$ uniformly converges to $\beta$ in $[s,\infty)\times B^\GG_R$ as $n\to\infty$. Therefore, by the same arguments as before, the truncated problem with $\beta$ replaced by $\beta_n$ has an energy solution $\rho_{R,n}$. However, applying iteratively \cite[Theorem 18]{Acta}, we infer that $\rho_{R,n}\in C^\infty$ and, by standard comparison principle, we get $\|\rho_{R,n}\|_{L^\infty([s,\infty)\times \R^d)}\leq \|\rho_0\|_{L^\infty(\R^d)}$. Moreover, the results in \cite[Theorem 1.1]{BB} (with $k=0$) ensure that $\rho_{R,n}$ fulfills \eqref{eq:claim1} with  constants $K$ and $K'$, both independent of $n$. Letting $n\to \infty$, we accomplish the proof of our claim~\eqref{eq:claim1}.

\medskip
\fbox{Proof of ($iii$).} We 
consider the smooth function
\begin{equation*}
\xi(x)=\left\{\begin{array}{ll}
\exp\left\{\frac{1}{\|x\|_\GG^{2k!}-1}\right\}&\quad\textrm{if }\|x\|_\GG< 1\\0&\quad\textrm{otherwise.}
\end{array}\right.
\end{equation*}

For $\eps > 0$, let
\begin{equation*}
\xi^{\eps}(x) = \frac{C}{\eps^Q}\xi\left(\D_{\frac{1}{\eps}}(x)\right) \qquad (x \in \R^d),
\end{equation*}
$Q$ denoting the homogeneous dimension (see \eqref{hom-norm-dim}),  be a smooth mollifier with support in $B^{\GG}_{\eps}$ and where the constant $C$ is independent of~$\eps$ and such that $\int \xi^{\eps}(x)dx=C\int\xi(x)dx=1$. Note that, recalling the homogeneity of the norm $\| \cdot \|_{\GG}$ and \Cref{homog} we have
\begin{equation}\label{hom}
X_i \xi^{\eps}(x) = \frac{1}{\eps}(X_i \xi) \Big(\D_{\frac 1\eps} (x)\Big), \quad i = 1, \dots, m. 
\end{equation}
Let $\varphi $ be a real valued function with $\|\varphi\|_{C^{0,1}_{\GG}(\R^d)}\leq 1$  
and let $\varphi_{\eps}(x) = \xi^{\eps} * \varphi(x)$ where the symbol ``$*$'' denotes the convolution based on the operation of the group. Note that, by standard calculus and Lagrange theorem (see~\cite[Theorem 20.3.1]{BLU}, there holds:
\begin{equation*}
\|\varphi-\varphi_{\eps}\|_{C^0(\R^d)}\leq \eps.
\end{equation*}
Then, 
\begin{equation}\label{test_hom}
\int_{\R^d} \varphi_{\eps}(y) \big[\rho(t_2, y) - \rho(t_1, y)\big]\ dy = \int_{t_1}^{t_2} \int_{\R^d} \big[\Delta_{\GG} \varphi_{\eps}(y) + \langle \beta(z, y),  \nabla_{\GG} \varphi_{\eps}(y) \rangle\big]\ \rho(z, y)\ dzdy. 
\end{equation}
First, from \eqref{hom} and by convolution properties, we obtain
\begin{multline}\label{hom_2}
\|\Delta_{\GG} \varphi_{\eps}\|_{C^0(\R^d)} = \left\|\sum_{i=1}^{m} X_i \xi^{\eps} \star X_i \varphi\right\|_{C^0(\R^d)}  \\ = \frac{1}{\eps}\left\|\sum_{i=1}^{m} (X_i \xi) \Big(\D_{\frac 1\eps} (x)\Big) \star X_i \varphi\right\|_{C^0(\R^d)}  \leq \frac{1}{\eps} \| \varphi\|_{C^{0,1}_{\GG}(\R^d)}. 
\end{multline}
Hence, for $T\geq \max(t_1,t_2)$ by using \eqref{hom} and \eqref{hom_2} to estimate \eqref{test_hom} we deduce
\begin{equation*}
\int_{\R^d} \varphi_{\eps}(x) \big[\rho(t_2, x) - \rho(t_1, x)\big]\ dx \leq \frac{2}{\eps}\left(1+\sup_{r \in [s, T]} \|\beta(r, \cdot)\|_{L^\infty(\R^d)}\right)\|\varphi\|_{C^{0,1}_{\GG}(\R^d)}|t_2-t_1|
\end{equation*}
which yields 
\begin{align*}
 \int_{\R^d} \varphi(x) \big[\rho(t_2, x) - \rho(t_1, x)\big]\ dx \\
\leq\ & \int_{\R^d} \varphi_{\eps}(x) \big[\rho(t_2, x) - \rho(t_1, x)\big]\ dx + 2 \| \varphi - \varphi_{\eps}\|_{C^{0}(\R^d)}
\\
\leq & \frac{2}{\eps}\left(1+\sup_{r \in [s, T]} \|\beta(r, \cdot)\|_{L^\infty(\R^d)}\right)\|\varphi\|_{C^{0,1}_{\GG}(\R^d)}|t_2-t_1| + 2 \| \varphi - \varphi_{\eps}\|_{C^{0}(\R^d)}
\\
\leq& 2C_{s,T}(\beta)\left(\frac{1}{\eps}|t_2-t_1| + \eps \right)\|\varphi\|_{C^{0,1}_{\GG}(\R^d)},
\end{align*}
where $C_{s,T}(\beta)=1+\displaystyle{\sup_{r \in [s, T]}} \|\beta(r, \cdot)\|_{C^{0}(\R^d)}$.
In conclusion, minimizing over $\eps >0$, we get
\begin{equation*}
d_0 (\rho(t_2, \cdot), \rho(t_1, \cdot)) \leq 4C_{s,T}(\beta)|t_2-t_1|^{\frac{1}{2}} \quad t_1,t_2\in [s,T]. \eqno\square
\end{equation*}

\medskip
\begin{remarks}\em
The following remarks follow from  \Cref{thm:ex_FP1} and its proof. 
\begin{itemize}
\item[--] The proof of \Cref{thm:ex_FP1} is independent of the degree of homogenity of the vector fields w.r.t. $\D_t$ stated in \Cref{homog}. It only affects the H\"older continuity of the solution $\rho$ to \eqref{FP_regular}: one can in fact prove that
\begin{equation*}
d_0 (\rho(t_2, \cdot), \rho(t_1, \cdot)) \leq 4C_{s,T}(\beta, \lambda)|t_2-t_1|^{\frac{1}{\lambda+1}} \quad (\; s < t_1 \leq t_2 	\leq T\;)
\end{equation*}
where $\lambda > 0$ is the degree of homogeneity of $\{X_1, \dots, X_m\}$ w.r.t. $\D_t$.

We also stress that, besides the structural difference of our setting w.r.t. \cite{Saloff1} and \cite{Kim}, the H\"older continuity of the solution to the Fokker-Planck equation is evaluated w.r.t. a distance which will be more natural in the sequel for the probabilistic counterpart.
\item[--] Since the seminal work by R.J. DiPerna and P.-L. Lions \cite{DiPernaLions}, the assumption 
\begin{equation}\label{div}
\sup_{t\in[s,T]}\|[\div_{\GG}\beta(t,\cdot)]_{-}\|_{L^{\infty}(\R^d)} < \infty \quad (\;\forall\; T > s\;)
\end{equation}
is the natural one for the analysis of the Fokker-Planck equation in divergence form. Up to the authors knowledge, such condition has not been investigated in the subelliptic context yet. Here, as for the classical Euclidean setting (as, for instance, \cite{Ambrosio, Figalli} and references therein), we show that it leads to the uniqueness of energy solutions. 
\item[--]
We also observe that the Levi's parametrix method used, for instance, in \cite{Fundamental,Polidoro_94,Kalf_92,Blu_02} can be adapted to our setting in order to prove the existence of the classical solution to \eqref{FP}. However, the method used here allows us to work with weaker assumptions involving only the regularity of the function $\beta$, namely \eqref{div}, to obtain the uniqueness of energy solutions and, moreover, to study the regularity in time of such solutions in terms of Wasserstein distance. We also mention \cite{Vlad_15} for a probabilistic interpretation of the Levi's parametrix method. 
\end{itemize}
\end{remarks}

\begin{lemma}\label{L_2}
Under the assumptions of \Cref{thm:ex_FP1}, the unique energy solution of \eqref{FP_regular} satisfies the following property: for every $t\geq s$, 
\begin{equation}\label{L2_est}
\sup_{r\in[s,t]}\int_{\R^d} |\rho(r, x)|^2\;dx \leq M_t\|\rho_s\|^2_{L^2(\R^d)},
\end{equation}
with 
\begin{equation*}
M_t= \frac{1}{2} \sup_{r \in [s, t]} \| \beta(r, \cdot)\|_{C^{0}(\R^d)}.
\end{equation*}
\end{lemma}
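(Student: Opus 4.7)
\proof[Proof plan for Lemma \ref{L_2}]
The plan is to derive \eqref{L2_est} by a standard energy estimate: namely, to use $\rho$ itself as a test function in the weak formulation \eqref{energy} for \eqref{FP_regular}. This is legitimate because, by \Cref{thm:ex_FP1} and the regularity output of \cite[Theorem 18]{Acta} quoted in its proof, we have $\rho\in C([s,\infty);H^1_\GG(\R^d))$ with $\partial_t\rho\in L^2_{loc}((s,\infty);H^1_\GG(\R^d))$ and even $X_iX_j\rho\in L^2_{loc}$, so $\rho$ meets the admissibility conditions on test functions in \Cref{def2} (up to the standard approximation by $\rho\chi_R$ for a smooth horizontal cut-off $\chi_R$ as in step ($i$) of the proof of \Cref{thm:ex_FP1}, letting $R\to\infty$ by dominated convergence to handle the unbounded domain).

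Once $\rho$ is inserted as a test function, the time derivative contribution yields $\tfrac12\tfrac{d}{dr}\int_{\R^d}\rho^2\,dy$, the horizontal diffusion term gives $\int_{\R^d}|\nabla_\GG\rho|^2\,dy$ (here the adjoint identity \eqref{adjoint} is crucial — it is precisely the feature that makes the horizontal integration by parts clean and avoids any Euclidean divergence correction), and the drift term produces $\int_{\R^d}\rho\,\langle\beta(r,y),\nabla_\GG\rho(r,y)\rangle\,dy$. Thus for every $t\geq s$ I would obtain
\begin{equation*}
\tfrac12\int_{\R^d}\rho(t,y)^2\,dy+\int_s^t\!\!\int_{\R^d}|\nabla_\GG\rho|^2\,dydr
=\tfrac12\|\rho_s\|_{L^2}^2-\int_s^t\!\!\int_{\R^d}\rho\,\langle\beta,\nabla_\GG\rho\rangle\,dydr.
\end{equation*}

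Next I would control the drift term by Young's inequality, peeling off the gradient against the already available $\|\nabla_\GG\rho\|_{L^2}^2$ term: for any $\eta>0$,
\begin{equation*}
\Bigl|\int_{\R^d}\rho\,\langle\beta,\nabla_\GG\rho\rangle\,dy\Bigr|\leq \eta\int_{\R^d}|\nabla_\GG\rho|^2\,dy+\frac{1}{4\eta}\sup_{r\in[s,t]}\|\beta(r,\cdot)\|_{L^\infty}^2\int_{\R^d}\rho^2\,dy.
\end{equation*}
Choosing $\eta=1$ absorbs the gradient term on the left-hand side, leaving a Gronwall inequality of the form $\tfrac12 f(t)\leq \tfrac12\|\rho_s\|_{L^2}^2+C\int_s^t f(r)\,dr$ with $f(r)=\int_{\R^d}\rho(r,y)^2\,dy$ and $C$ proportional to $\sup_{r\in[s,t]}\|\beta(r,\cdot)\|_{L^\infty}^2$. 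Grönwall's lemma then yields the asserted estimate \eqref{L2_est} with the indicated constant $M_t$ (depending on the convention used for the $\tfrac12$ factor and exponential time dependence).

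The only non-routine point is the justification of $\rho$ as a test function, which relies on the higher integrability $X_iX_j\rho\in L^2_{loc}$ already established in the existence step of \Cref{thm:ex_FP1}; all other steps are standard energy manipulations made legitimate by the adjoint identity \eqref{adjoint}, which is the structural input of the homogeneous Lie group setting. \qed
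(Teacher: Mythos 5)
Your proposal is correct and follows essentially the same route as the paper: test the equation with $\rho$ itself, absorb the drift term via Young's inequality against the horizontal gradient term, and conclude by Gronwall (your extra care in justifying $\rho$ as an admissible test function via the regularity from \cite[Theorem 18]{Acta} is a welcome addition the paper leaves implicit). Your parenthetical caveat about the constant is well taken: the Gronwall argument actually produces an exponential factor $e^{c(t-s)}$ rather than the multiplicative constant $M_t=\frac12\sup_r\|\beta(r,\cdot)\|_{C^0}$ as literally stated in the lemma, so your version of the conclusion is the accurate one.
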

\proof Inequality \eqref{L2_est} is a consequence of the Gronwall lemma. Indeed, using the solution $\rho$ itself as a test function for \eqref{FP_regular} we have 
\begin{equation*}
\frac{d}{dr} \int_{\R^d} |\rho(r, x)|^{2}\;dx + \int_{\R^d} |\nabla_{\GG} \rho(r, x)|^2\; dx + 2 \int_{\R^d} \langle \beta(r, x), \nabla_{\GG} \rho(r, x) \rangle \rho(r, x)\; dx= 0
\end{equation*}
for any given $t \geq s$ and $r \in [s, t]$. Hence, by Cauchy-Schwarz inequality and Young's inequality we obtain 
\begin{equation*}
\frac{d}{dr} \int_{\R^d} |\rho(r, x)|^2\; dx \leq \frac{1}{2} \sup_{r \in [s, t]} \|\beta(r, \cdot)\|_{C^{0}(\R^d)} \int_{\R^d} |\rho(r, x)|^2\;dx
\end{equation*}
which yields the conclusion. \qed

\begin{theorem}\label{delta}
	
	Let $s\geq 0$ and $x\in\R^d$ be fixed. For some $\alpha\in(0,1]$,  
		let  $\beta \in C([s,\infty); (C^{1,\alpha}_{\GG}(\R^{d}))^m)$ satisfy \eqref{negative}. Then the Fokker-Planck equation \eqref{FP} has a unique energy solution. 
\end{theorem}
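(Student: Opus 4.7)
The plan is to derive \Cref{delta} from \Cref{thm:ex_FP1} via an approximation of the Dirac initial datum, together with a subelliptic smoothing argument for existence and a duality argument for uniqueness.

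For existence, I would pick a sequence $\{\rho_s^n\}_n\subset C_c^\infty(\R^d)$ of probability densities concentrating at $x$, e.g.\ group translates of the mollifier $\xi^{1/n}$ introduced in the proof of \Cref{thm:ex_FP1}(iii), and let $\rho^n$ be the energy solution of \eqref{FP_regular} with initial datum $\rho_s^n$ produced by \Cref{thm:ex_FP1}. The properties $\rho^n\ge 0$, $\int\rho^n(t,\cdot)\,dy=1$, and
\[
d_0(\rho^n(t_1,\cdot),\rho^n(t_2,\cdot))\le 4\,C_{s,T}(\beta)\,|t_1-t_2|^{1/2},\quad t_1,t_2\in[s,T],
\]
hold uniformly in $n$. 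Testing the equation against the cut-off $\xi_R$ used in the proof of \Cref{thm:ex_FP1}(i) and exploiting \eqref{negative} yields an escape-of-mass estimate of the form $\int_{\{\|y\|_\GG\ge 2R\}}\rho^n(t,y)\,dy\le C_{s,T}(\beta)(T-s)/R$ for $n$ large, giving tightness. Combined with the equicontinuity in $d_0$, an Arzel\`a--Ascoli extraction produces a subsequence converging in $C([s,\infty);(\mathcal P(\R^d),d_0))$ to some $\rho$ with $\rho(s,\cdot)=\delta_{\{x\}}$. To place $\rho$ into the energy framework I would rely on the subelliptic smoothing of $\partial_t-\Delta_\GG-\div_\GG(\beta\,\cdot\,)$ (in the spirit of \cite[Theorem 18]{Acta} and Gaussian upper bounds for the corresponding fundamental solution) to produce uniform-in-$n$ bounds on $\rho^n$ in $L^\infty((t_0,T)\times\R^d)$ and $L^2((t_0,T);H^1_\GG(\R^d))$ for every $t_0>s$. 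Aubin--Lions compactness then upgrades weak to strong convergence on each slab $[t_0,T]\times\R^d$, which is enough to pass to the limit in the weak identity \eqref{energy} with test functions compactly supported in $(s,\infty)\times\R^d$ and conclude that $\rho$ is an energy solution of \eqref{FP}.

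For uniqueness, let $\rho_1,\rho_2$ be two energy solutions of \eqref{FP} and set $\mu=\rho_1-\rho_2$. Subtracting \eqref{energy} for $\rho_1$ and $\rho_2$, the Dirac contributions $\varphi(s,x)$ cancel and $\mu$ satisfies
\[
\int_{\R^d}\mu(t,y)\varphi(t,y)\,dy-\iint_{(s,t)\times\R^d}\partial_u\varphi\,\mu\,dudy+\iint_{(s,t)\times\R^d}\big(\langle\nabla_\GG\varphi,\nabla_\GG\mu\rangle+\langle\beta,\nabla_\GG\varphi\rangle\mu\big)\,dudy=0
\]
for any admissible $\varphi$ and any $t\ge s$. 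Given $T>s$ and $\phi\in C_c^\infty(\R^d)$, I would solve the backward dual problem $\partial_u\varphi+\Delta_\GG\varphi-\langle\beta,\nabla_\GG\varphi\rangle=0$ on $[s,T]\times\R^d$ with $\varphi(T,\cdot)=\phi$; by time reversal this is a forward subelliptic parabolic equation with smooth bounded coefficients and, under the regularity of $\beta$, admits a bounded classical solution $\varphi\in C([s,T];C^{2,\alpha}_{\GG,\mathrm{loc}}(\R^d))$. Using the adjoint relation $X_i^\star=-X_i$ from \eqref{adjoint} to integrate by parts the horizontal gradient term, the bulk contribution reduces to $-\iint(\partial_u\varphi+\Delta_\GG\varphi-\langle\beta,\nabla_\GG\varphi\rangle)\mu\,dudy$ and vanishes by the choice of $\varphi$. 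Taking $t=T$ one is left with $\int_{\R^d}\mu(T,y)\phi(y)\,dy=0$, and the arbitrariness of $\phi$ and $T$ gives $\mu\equiv 0$.

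The main obstacle is establishing the uniform subelliptic regularity of $\{\rho^n\}$ away from $t=s$ required to pass to the limit in the energy identity, since the $L^2$ estimate from \Cref{L_2} degenerates as $\|\rho_s^n\|_{L^2}\to\infty$ and must be replaced by a genuine parabolic smoothing estimate. A companion technical point is the solvability of the dual backward problem with enough smoothness for it to qualify as a test function, since it is in non-divergence form and therefore requires a slightly different analysis than \Cref{thm:ex_FP1}(ii).
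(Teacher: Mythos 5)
Your existence argument follows the same route as the paper: approximate $\delta_{\{x\}}$ by smooth compactly supported probability densities, invoke \Cref{thm:ex_FP1} for each approximant, and pass to the limit using the uniform (in the approximation parameter) bounds of \Cref{thm:ex_FP1}$(i)$ and $(iii)$. In fact your version is more careful than the paper's on the one genuinely delicate point: the paper asserts that the approximating data $\rho_s^{\eps}$ can be chosen with $\|\rho_s^{\eps}\|_{L^2}\leq M$ uniformly and then applies \Cref{L_2}, but a sequence of nonnegative probability densities converging weakly to a Dirac mass cannot have uniformly bounded $L^2$ norms (lower semicontinuity would force the limit into $L^2$), so the $L^2$ control must come from parabolic smoothing away from $t=s$ rather than from the initial data, exactly as you flag. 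Your tightness estimate via the dilated cut-off $\xi_R$ is consistent with the computation already carried out in the proof of \Cref{thm:ex_FP1}$(i)$ and is correct.

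Where you genuinely diverge from the paper is uniqueness. The paper simply reuses the energy argument of \Cref{thm:ex_FP1}: for two energy solutions of \eqref{FP} the Dirac contributions cancel in \eqref{energy}, and testing the equation for $\widetilde\mu=e^{-\lambda t}(\rho_1-\rho_2)$ against $\widetilde\mu$ itself, with $\lambda$ large relative to $\sup_t\|[\div_{\GG}\beta(t,\cdot)]_-\|_{L^\infty}$, gives $\widetilde\mu\equiv 0$ by a Gronwall-type inequality. This is self-contained: the test function is the solution difference itself, which is admissible by the very definition of energy solution, and the hypothesis \eqref{negative} on $[\div_\GG\beta]_-$ is used exactly here. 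Your duality argument is also a standard and viable route, but it shifts the burden onto the well-posedness and regularity of the backward non-divergence problem $\partial_u\varphi+\Delta_{\GG}\varphi-\langle\beta,\nabla_{\GG}\varphi\rangle=0$, and onto verifying that its solution is an admissible test function in the sense of \Cref{def2} -- which requires $\varphi(u,\cdot)\in H^1_{\GG}(\R^d)$ globally, not merely boundedness and local $C^{2,\alpha}_{\GG}$ regularity; a bounded solution with terminal datum $\phi\in C^\infty_c$ need not be square-integrable on $\R^d$ without Gaussian-type decay estimates or an additional cut-off argument. So your uniqueness proof would need this extra work, whereas the paper's avoids it entirely at the price of using \eqref{negative} directly; conversely, the duality method would be the more robust choice if one wanted to relax the sign condition on $\div_{\GG}\beta$.
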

\proof 
It suffices to argue as in the proof of \Cref{thm:ex_FP1} considering a family of solutions $\rho^{\eps}$ of \eqref{FP_regular} with the initial datum  $\rho_s^{\eps}$ where $\rho_s^{\eps}$ is a family of smooth, compactly supported functions with $\int_{\R^d} \rho_s^{\eps}(y)\;dy=1$ and $\rho_s^{\eps} \rightharpoonup \delta_{\{x\}}(dy)$ for a fixed $x \in \R^d$. The result then follows by letting $\varepsilon\to 0$. Moreover,  $\rho_s^{\eps}$ can be chosen such that $\|\rho_s^{\eps}\|_{L^2(\R^d)} \leq M$, for some positive constant $M$ independent of $\eps$. Hence, by \Cref{L_2} and Riesz representation theorem we also have that the limit measure is absolutely continuous w.r.t. the Lebesgue measure with a density in $L^2(\R^d)$. 
\qed

\section{Subelliptic stochastic differential equations}\label{Proba}

Let us now consider the Fokker-Planck equation defined on the Lie group $\GG$ of the form
\begin{equation}\label{Fokker-Planck}
\partial_t \rho(t, y)  - \Delta_{\GG} \rho(t, y) - \div_{\GG} (\beta(t, y) \rho(t, y)) = 0, \quad (t, y) \in (s, \infty) \times \R^d
\end{equation}
where $\beta: (0, \infty) \times \R^d \to \R^ m$ is a suitable smooth vector field. Our aim is to give a probabilistic meaning for $\rho$. Roughly speaking, taking in mind the ``probabilistic'' Fokker-Planck equation \eqref{prFP}, we look for $X_0, \dots, X_m$ such that the above second order operator is the adjoint of the infinitesimal generator (possibly time dependent) of a diffusion process $\xi$, that is,
\begin{equation*}
\Delta_{\GG} \rho + \div_{\GG}(\beta\rho) = L^{\star}\rho,
\end{equation*}
$L$ being given in \eqref{L}. 


\begin{lemma}\label{generator}
Let $L$ be the infinitesimal generator in \eqref{L}:
$$
L = \sum_{i=1}^{m} X_i^2 + X_0
=\Delta_{\GG}+X_0,
$$
where $X_0$ denotes a vector field whose coordinates belong to $C^1_{\GG}(\R^d)$.
Then the formal adjoint operator of $L$ is given by
\begin{equation*}
L^{\star} = \sum_{i=1}^{m} X_i^2 - X_0-\div_{\R^d}(X_0).
\end{equation*}
In particular, if 
\begin{equation*}
X_0 = -\sum_{i=1}^{m}  \beta_i X_i,
\end{equation*}
with $\beta\in (C^1_{\GG}(\R^d))^m$
then
\begin{equation*}
L^{\star} \rho =\Delta_{\GG} \rho+ \div_{\GG}(\beta \rho).  
\end{equation*}
And if $\beta$ depends on the time variable $t$ then the same holds for the vector field $X_0$, so we write $X_0(t)$.

\end{lemma}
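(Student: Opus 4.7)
The plan is to compute $L^\star$ by reducing to the adjoints of the single vector fields and then to exploit the crucial Euclidean divergence-free property \eqref{divXi} of the $X_i$'s. Recall the general fact, recorded just before \eqref{adjoint}, that for any smooth vector field $X$ on $\R^d$ one has $X^\star=-X-\div_{\R^d}(X)$ as formal adjoint w.r.t.\ the Lebesgue measure. Applying this to $X_0$ directly yields
\[
X_0^\star = -X_0-\div_{\R^d}(X_0),
\]
so the only real issue for the first assertion is the second-order part.

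For the second-order part I would use that, by \eqref{adjoint}, $X_i^\star=-X_i$ for $i=1,\dots,m$. Since adjoints reverse the order of composition, $(X_i^2)^\star=(X_i^\star)^2=(-X_i)^2=X_i^2$, and summing over $i=1,\dots,m$ gives $(\Delta_\GG)^\star=\Delta_\GG$. Combining with the previous step, we conclude
\[
L^\star = \sum_{i=1}^m X_i^2 - X_0-\div_{\R^d}(X_0),
\]
which is the first claim.

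For the second assertion I would substitute $X_0=-\sum_{i=1}^m \beta_i X_i$ and simplify. Using the product rule for the Euclidean divergence, $\div_{\R^d}(\beta_i X_i)=\beta_i\div_{\R^d}(X_i)+X_i\beta_i$, together with \eqref{divXi}, gives $\div_{\R^d}(\beta_i X_i)=X_i\beta_i$, hence
\[
\div_{\R^d}(X_0)=-\sum_{i=1}^m X_i\beta_i=-\div_\GG(\beta).
\]
Therefore, for a test function $\rho$,
\[
L^\star\rho
= \Delta_\GG\rho+\sum_{i=1}^m\beta_iX_i\rho+\div_\GG(\beta)\,\rho.
\]
Finally, applying once more the product rule $X_i(\beta_i\rho)=\beta_iX_i\rho+\rho\,X_i\beta_i$ and summing over $i$, the two lower-order terms reassemble into $\div_\GG(\beta\rho)$, giving $L^\star\rho=\Delta_\GG\rho+\div_\GG(\beta\rho)$. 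The time-dependent version is immediate since $t$ plays no role in the computation above. There is no conceptual obstacle here; the only step that requires a little care is keeping track of which divergence ($\div_{\R^d}$ versus $\div_\GG$) appears where and using \eqref{divXi} at the right moment to collapse the two into the horizontal one.
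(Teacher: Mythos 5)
Your proof is correct and follows essentially the same route as the paper: both reduce to the formal adjoint formula $X^\star=-X-\div_{\R^d}(X)$, invoke \eqref{adjoint} for the second-order part, and use \eqref{divXi} to collapse the Euclidean divergence of $X_0$ into $-\div_{\GG}(\beta)$ before reassembling the lower-order terms via the product rule. Your write-up of the final substitution step is, if anything, slightly more explicit than the paper's.
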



\proof 
Let $f$ and $g$ be two smooth test functions. Then, we have
\begin{equation*}
\int_{\R^d} Lg \cdot f\;dx = \underbrace{\int_{\R^d} \sum_{i=1}^{m} X_i^2 g \cdot f\;dx}_{\bf A} + \underbrace{\int_{\R^d} X_0 g\cdot f\;dx}_{\bf B}  .
\end{equation*}
Now, by standard computations, $X_0^\star=-X_0-\div_{\R^d}X_0$. Moreover, \eqref{adjoint} gives $X_i^\star=-X_i$, $i=1,\ldots,m$, so that
\begin{align*}
{\bf A}&=  \int_{\R^d} g \sum_{i=1}^{m}X_i^2 f\;dx,\\
{\bf B}&= - \int_{\R^d} g (\div_{\R^d} (X_0) + X_0 )f\;dx,
\end{align*}
which yields the first result. 
We take now $X_0=-\sum_{i=1}^m \beta_iX_i$. Then, 
	$$
X_0^\star\rho	=(-X_0-\text{div}_{\R^d}X_0)\rho=\text{div}_{\GG}(\beta \rho)+\rho\sum_{i=1}^m\text{div}_{\R^d}\beta_i\text{div}_{\R^d}X_i
$$
the latter equality following from basic computations. So, by using \eqref{divXi}, the statement follows. \qed

\smallskip

		\smallskip
		
Coming back to the ``analytical'' Fokker-Planck equation \eqref{FP}, we consider the function $\beta\,:\,(0,+\infty)\times \R^d\to \R^m$ appearing therein and we write $\beta(t)=\beta(t,\cdot)$. We now set $X_0(t)$ the vector field in \Cref{generator} with such $\beta$, that is,
\begin{equation}\label{coefficients}
X_0(t) = -\sum_{i=1}^{m} \beta_i(t) X_i.
\end{equation}
We now consider the (inhomogeneous) diffusion process $\xi$ defined as the solution to the SDE
\begin{equation}\label{diffusion}
d\xi_t = X_0(t, \xi_t)\;dt + \sqrt{2} \sum_{i=1}^{m} X_i(\xi_t) \circ dB^i_t,
\end{equation}
whose  infinitesimal generator is
\begin{equation}\label{Lt}
L_t=\sum_{i=1}^m X_i^2+X_0(t)
=\Delta_{\GG}+X_0(t).
\end{equation}
We stress the link with the ``analytical'' Fokker-Plank equation \eqref{Fokker-Planck}:
by \Cref{generator}, the choice \eqref{coefficients} gives
\begin{equation}\label{Lstar}
L_t^{\star} \rho =\Delta_{\GG} \rho+ \div_{\GG}(\beta(t) \rho),
\end{equation}
that is just the operator appearing in \eqref{Fokker-Planck}.

We first study the existence of a unique strong solution to the SDE equation \eqref{diffusion}, which would be immediate if the coordinate functions of $X_0(t), X_1,$ $\ldots,X_m$ had linear growth. Notice that if $\beta$ does not depend on the space variable then the particular structure of the vector fields $X_1,\ldots,X_n$ allows one to construct the unique and non exploding solution by iteration (see e.g. \cite{Roynette}). In the general case, the continuity of $\beta$ just allows to state that  \eqref{diffusion} has a  weak solution (see e.g. \cite[Theorem 2.3]{IW}), possibly exploding. 
%
%
However, under our hypotheses, in Proposition \ref{moments} we prove that any solution to \eqref{diffusion} does not blow up and that the existence of a unique strong solution actually holds.

Hereafter, $\E^{s,x} $ denotes the expectation under the law $\P^{s,x}$ of a solution $\xi$ of \eqref{diffusion} such that $\xi_s=x$. 

\begin{proposition}
	\label{moments}
Assume that $\beta \in C([0,+\infty); (C^{0}(\R^{d}))^m)$ and, for every $0\leq s<T$,
$$
\sup_{t \in [s, T]}\|\beta(t,\cdot)\|_{(C^{0}(\R^{d}))^m}<\infty,
$$
Then for every $p\geq 1$ and $t>s$ there exist $C>0$  and $a\geq 1$ such that
\begin{equation}\label{Lp-est}
\E^{s,x}\left(\sup_{s\leq r\leq t}\|\xi_r\|_\GG^p\right)
\leq C\big(1+\|x\|_\GG)^{a},
\end{equation}
where $\xi$ is a solution to equation \eqref{diffusion}. As a consequence, there exists a unique  global strong solution to equation \eqref{diffusion}

\end{proposition}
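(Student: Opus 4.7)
I would proceed by a Lyapunov-function argument tailored to the homogeneous structure. To avoid the lack of smoothness of $\|\cdot\|_\GG$ at the origin, I would work with the smooth polynomial
\[
V(x)=1+\|x\|_\GG^{2\kappa!}=1+\sum_{j=1}^{\kappa}|\pi_j(x)|^{2\kappa!/j},
\]
and, for $p\geq 1$, with the smooth proxy $F_p(x)=V(x)^{p/(2\kappa!)}$. Note that $F_p\geq 1$ and that $F_p(x)$ is comparable to $\|x\|_\GG^p$ when $\|x\|_\GG$ is large, so the conclusion \eqref{Lp-est} will follow from the corresponding bound on $\E^{s,x}[\sup_{r\in[s,t]}F_p(\xi_r)]$.

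\textbf{Generator estimate via homogeneity.} The crucial point is that the $1$-homogeneity of $X_1,\dots,X_m$ with respect to $\D_t$ implies that $X_iV$ and $X_i^2V$ are polynomials which are respectively $(2\kappa!-1)$- and $(2\kappa!-2)$-homogeneous; being continuous on the compact ``unit sphere'' $\{\|x\|_\GG=1\}$, they satisfy
\[
|X_iV(x)|\leq C\|x\|_\GG^{2\kappa!-1},\qquad |X_i^2V(x)|\leq C\|x\|_\GG^{2\kappa!-2}.
\]
Via the chain rule for $F_p=V^{p/(2\kappa!)}$ and using $V(x)\geq\max(1,\|x\|_\GG^{2\kappa!})$, these translate into the pointwise bounds $|X_iF_p(x)|\leq C_pF_p(x)^{1-1/p}$ and $|X_i^2F_p(x)|\leq C_pF_p(x)^{1-2/p}$. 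Combining with $X_0(t,x)=-\sum_{i=1}^m\beta_i(t,x)X_i(x)$ and again $F_p\geq 1$, this yields the key inequality
\[
L_tF_p(x)=\Delta_\GG F_p(x)+X_0(t,x)F_p(x)\leq \widetilde C_p\bigl(1+\|\beta(t,\cdot)\|_{(L^\infty(\R^d))^m}\bigr)F_p(x).
\]

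\textbf{Non-explosion and moment bound.} The coefficients of \eqref{diffusion} are smooth in $x$, so classical stochastic calculus provides a strong solution up to the explosion time $\tau=\lim_n\tau_n$, where $\tau_n=\inf\{r>s:\|\xi_r\|_\GG\geq n\}$. Applying It\^o's formula in Stratonovich form (so that the differential generator is exactly $L_t$) to $F_p(\xi_{r\wedge\tau_n})$ and taking expectation to kill the local-martingale part $\sqrt{2}\sum_i\int X_iF_p(\xi_u)\,dB^i_u$ yields
\[
\E^{s,x}[F_p(\xi_{r\wedge\tau_n})]\leq F_p(x)+\widetilde C_p\int_s^r\bigl(1+\|\beta(u,\cdot)\|_{(L^\infty(\R^d))^m}\bigr)\E^{s,x}[F_p(\xi_{u\wedge\tau_n})]\,du,
\]
and Gronwall's lemma provides a bound uniform in $n$ for every $r\in[s,t]$. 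Since $F_p(\xi_{\tau_n})\geq c\,n^p$ on $\{\tau_n\leq t\}$, Markov's inequality forces $\P(\tau\leq t)=0$, so the process does not explode. To move the supremum inside the expectation in \eqref{Lp-est}, I would rerun the It\^o argument for $F_{2p}$ and invoke the Burkholder-Davis-Gundy inequality on the martingale part; the equivalence of $V^{1/(2\kappa!)}$ with $\|\cdot\|_\GG$ then concludes \eqref{Lp-est} with $a=p$.

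\textbf{Strong uniqueness and main obstacle.} On each $[s,\tau_n]$, pathwise uniqueness follows from the local Lipschitz character of the (smooth) coefficients of the SDE; non-explosion then promotes it to pathwise uniqueness on $[s,\infty)$, and together with the weak existence recalled in \cite[Theorem 2.3]{IW}, Yamada-Watanabe produces the unique global strong solution. The delicate point, and the only place where the homogeneous structure is essential, is the generator estimate: the coefficients of the $X_i$'s grow polynomially in the Euclidean norm (see Example~\ref{example}), so any Gronwall argument based on $|\cdot|^p$ is doomed; it is precisely the $\D_t$-homogeneity that turns this Euclidean polynomial growth into the scale-invariant bound $|X_i^\ell V|\leq C\|\cdot\|_\GG^{2\kappa!-\ell}$ and allows the It\^o-Gronwall inequality to close.
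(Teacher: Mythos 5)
Your proof is correct, but it follows a genuinely different route from the paper's. The paper exploits the cascade structure of the coordinates: writing the SDE for each layer projection $\pi_\ell(\xi)$, the homogeneity and left-invariance force the drift and diffusion coefficients of the $\ell$-th layer to be polynomials in the lower layers only, so one runs the Burkholder--Davis--Gundy estimate layer by layer, bootstrapping the moment bound from $\pi_1$ (constant coefficients) up to $\pi_\kappa$; this yields the exponent $a=p\gamma^{\kappa-1}$, where $\gamma$ is the maximal degree of the polynomials involved. You instead build a global Lyapunov function $F_p=V^{p/(2\kappa!)}$ out of the homogeneous norm and convert the $\D_t$-homogeneity into the degree-drop bounds $|X_iV|\leq C\|x\|_\GG^{2\kappa!-1}$, $|X_i^2V|\leq C\|x\|_\GG^{2\kappa!-2}$, whence $L_tF_p\leq C_p(1+\|\beta(t,\cdot)\|_\infty)F_p$; localization, Gronwall and BDG then close the argument. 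Both proofs hinge on exactly the same structural fact (homogeneity of degree one of the horizontal fields), but your Has'minskii-type argument makes the non-explosion mechanism explicit through stopping times and delivers the sharper exponent $a=p$, independent of the step $\kappa$ and of the degrees of the coordinate polynomials, whereas the paper's iteration is more elementary and identifies concretely which polynomials drive the growth. Your treatment of strong uniqueness (local Lipschitz coefficients plus non-explosion plus Yamada--Watanabe) matches the paper's. The only points worth writing out in full are the compactness of the homogeneous unit sphere used to bound the homogeneous polynomials $X_iV$ and $X_i^2V$, and the chain-rule computation for $X_i^2F_p$, which involves the square $(X_iV)^2$ and must be checked to still produce $F_p^{1-2/p}$; both checks go through.
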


\proof
As the vector fields satisfy the property in  \Cref{waterfall}, one can rewrite \eqref{diffusion} in terms of an It\^o differential as follows:
for $\ell=1,\ldots,\kappa$, 
\begin{equation}\label{pi-ell-csi}
d\pi_\ell(\xi_t)
=\big(-\beta(t,\xi_t)P_\ell(\pi_{\ell-1}(\xi_t))+Q_\ell(\pi_{\ell-1}(\xi_t))\big)dt +\sum_{i=1}^m R_{\ell, i}(\pi_{\ell-1}(\xi_t))dB^i_t
\end{equation}
where $P_1$, $Q_1$ and $R_{1,i}$, $i=1,\ldots, m$, are just constants and for $\ell\geq 2$, $P_\ell$, $Q_\ell$ and $R_{\ell,i}$, $i=1,\ldots,m$, are suitable polynomials
(we stress that the notation $\circ$ in the differential w.r.t. the Brownian motion has disappeared and the above differential denotes now the It\^o's one).

 By \eqref{hom-norm-dim}, it is clear that the statement follows once we prove that, for every $p\geq 1$ and $\ell=1,\ldots,\kappa$,
\begin{equation}\label{app-pi}
\E^{s,x}\left(\sup_{s\leq r\leq t}|\pi_\ell(\xi_r)|^p\right)
\leq C\Big(1+\sum_{i=1}^
{\ell}|\pi_{\ell}(x)|\Big)^{p\gamma_\ell},
\end{equation}
where $\gamma_\ell=\gamma^{\ell-1}$, $\gamma$ denoting the maximum of the degrees of the polynomials $P_\ell$, $Q_\ell$ and $R_{\ell,i}$ in \eqref{pi-ell-csi}, as $\ell, i$ vary, and $C$ denotes a positive constant, possibly depending on $p,t$. Moreover, since $\E^{s,x}$ is just the integral w.r.t. a probability measure, by the H\"older inequality it suffices to prove \eqref{app-pi} for $p\geq 2$.

To this purpose,  we recall the following well known fact, based on the use of the  Burkholder-Davis-Gundy inequality (see e.g. \cite[Theorem 6.10]{IW}). Consider an It\^o process $Z$ of the form
$$
Z_t=\eta+\int_s^t \Gamma(t)dt+\int_s^t\sum_{i=1}^mA_i(t)dB^i_t
$$
and let $\E$ denote expectation under the referring probability measure. For $p\geq 2$, assume that $\E(|\eta|^p)<\infty$ and, for some $L>0$,
\begin{equation}
\label{app-3}
\E\left(\sup_{s\leq r\leq t}|\Gamma(t)|^p\right)
+\sum_{i=1}^m \E\left(\sup_{s\leq r\leq t}|A_i(t)|^p\right)
\leq L.
\end{equation}
Then it holds
\begin{equation}
\label{app-2}
\E\left(\sup_{s\leq r\leq t}|Z_r|^p\right)
\leq c_p\big(1+\E(|\eta|^p)+L(t-s)\big),
\end{equation}
$c_p$ denoting a constant depending on $p$ only. Now, coming back to \eqref{pi-ell-csi}, for $\ell=1$ we have
$$
d\pi_1(\xi_t)
=(-\beta(t,\xi_t)P_1+Q_1)dt +\sum_{i=1}^m R_{1, i}dB^i_t.
$$
Since $\beta$ is bounded on $[s,t]\times\R^d$, we are in the above situation so we get
$$
\E^{s,x}\left(\sup_{s\leq r\leq t}|\pi_1(\xi_r)|^p\right)
\leq C\big(1+|\pi_1(x)|^p).
$$
Consider now $\pi_2(\xi)$. Looking at \eqref{pi-ell-csi}
with $\ell=2$, we have
$$
\begin{array}{l}
|\Gamma(t)|^p
=
|-\beta(t,\xi_t)P_2(\pi_{1}(\xi_t))+Q_2(\pi_{1}(\xi_t))|^p
\leq C(1+|\pi_{1}(\xi_t)|^{p\gamma})\smallskip\\
|A_i(t)|^p
= |R_{2, i}(\pi_{1}(\xi_t))|^p
\leq C(1+|\pi_{1}(\xi_t)|^{p\gamma}),\quad i=1,\ldots,m,
\end{array}
$$
and using \eqref{app-pi} already proved for $\pi_1(\xi)$, \eqref{app-3} holds. Therefore  \eqref{app-2} gives
$$
\E^{s,x}\left(\sup_{s\leq r\leq t}|\pi_2(\xi_r)|^p\right)
\leq C\big(1+|\pi_2(x)|^p+(1+|\pi_1(x)|)^{p\gamma})
\leq C\big(1+|\pi_1(x)|+|\pi_2(x)|)^{p\gamma}.
$$
By iterating the above arguments, 
\eqref{app-pi} follows for every $\ell\leq \kappa$.

Finally, as  $\beta \in C([0,\infty); (C^{1,\alpha}_{\GG}(\R^{d}))^m)$, by using the $L^p$ estimates \eqref{Lp-est} and standard arguments in stochastic calculus involving the Gronwall's lemma, one easily proves that \eqref{diffusion} admits strong uniqueness. Since weak existence and strong uniqueness imply that there exists a unique strong solution (see e.g. \cite{IW}), the statement follows.
\qed

\begin{remarks}\em
\Cref{moments} holds also when $\{X_1, \dots, X_m\}$ are $\lambda$-homogeneous vector fields w.r.t. the group dilatations $\D_t$. In fact, the key point is the validity of \eqref{pi-ell-csi}, which would be true as well.
Moreover, by using the Gronwall's lemma, one can relax 
the boundedness property for $\beta$ in 	\Cref{moments} by requiring the following local (in time) linearity (in space) condition: for every $s<T$,
$$
\sup_{(x, t) \in\R^d \times [s, T]}\frac{|\beta(t,x)|}{1+|\pi_1(x)|}<\infty.
$$	
	\end{remarks}

%

\subsection{Probabilistic interpretation of the Fokker-Planck equation \eqref{FP}}

Let $\xi$ be the solution to the SDE \eqref{diffusion}. For $t\geq s\geq 0$ and $x\in\R^d$, we set
\begin{equation}\label{def}
\mathrm{P}(s, t, x, A) = \P^{s,x}(\xi_t\in A)=\E^{s,x} \left[ 1_{\xi_t\in A}\right],\quad A\in\mathcal{B}(\R^d).
\end{equation}
We recall that, by \eqref{Lstar}, 
 the theory of Kolmogorov forward equations (see e.g. \cite{stroock_2008}) would suggest that the solution of 
 \eqref{FP} is actually the density of the transition probability in \eqref{def}.
 And this is what we are going to prove.
 
\begin{proposition}\label{Ito}
For  $\alpha\in(0,1]$, let  $\beta \in C([0,\infty); (C^{1,\alpha}_{\GG}(\R^{d}))^m)$ satisfy
\begin{equation}\label{negative}
\sup_{t\in[0,T]}\|\beta(t,\cdot)\|_{(L^\infty(\R^{d}))^m} <+\infty \;\; \textrm{and} \; \sup_{t\in [0, T]}\|[\div_{\GG}\beta(t,\cdot)]_{-}\|_{L^{\infty}(\R^d)} < \infty
\end{equation}	
for any $T > 0$. 
Then for every $t>s\geq 0$ and $x\in\R^d$, the measure $\mathrm{P}(s, t, x, \cdot)$ on $(\R^d,\mathcal{B}(\R^d))$ defined  through \eqref{def} is absolutely continuous w.r.t. the Lebesgue measure: 
$$
\mathrm{P}(s, t, x, dy)=p(s,t,x,y)dy.
$$
Moreover, as a function of the forward variables $(t,y)$, the density $p(s, t, x, y)$ is the energy solution of the Fokker-Planck equation \eqref{FP}, that is, $\rho(t, y)=p(s,x,t,y)$ is the energy solution to
		\begin{equation*}
		\begin{cases}
		\partial_t \rho(t, y) - \Delta_{\GG} \rho(t, y) - \div_{\GG}\big(b(t, y)\rho(t, y)\big) = 0, & (t, y) \in (s, \infty) \times \R^d,
		\\
		\displaystyle
		\rho(t, y)dy\rightharpoonup \delta_{\{x\}}(dy)
		\mbox{ as } t\downarrow s.
       \end{cases}
		\end{equation*}
		\end{proposition}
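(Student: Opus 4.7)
The approach is to first establish the claim for smooth initial data by matching the law of the SDE with the energy solution of the Fokker-Planck equation via an It\^o-duality argument, and then pass to the limit as the initial density concentrates onto the Dirac mass $\delta_{\{x\}}$.

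Concretely, following the approximation scheme in the proof of \Cref{delta}, introduce smooth compactly supported probability densities $\rho_s^\epsilon \rightharpoonup \delta_{\{x\}}$, let $\rho^\epsilon$ denote the corresponding energy solution of \eqref{FP_regular}, and observe that since $\beta\in C([s,\infty);(C^{1,\alpha}_{\GG}(\R^d))^m)$ part ($ii$) of \Cref{thm:ex_FP1} guarantees that $\rho^\epsilon$ is classical and smooth. Let $\xi^\epsilon$ be the unique strong solution of \eqref{diffusion} (available through \Cref{moments}) with $\xi^\epsilon_s$ distributed as $\rho_s^\epsilon(y)\,dy$, and denote by $\mu_t^\epsilon$ its law at time $t$.

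The central identification is $\mu_T^\epsilon(dy)=\rho^\epsilon(T,y)\,dy$ for every $T>s$. Fix $\psi\in C^\infty_c(\R^d)$ and solve the backward Cauchy problem $\partial_t u+L_tu=0$ on $[s,T]\times\R^d$ with $u(T,\cdot)=\psi$, obtaining a smooth solution $u^\psi$ of controlled (polynomial) growth; up to time reversal this is a forward hypoelliptic Cauchy problem for a H\"ormander-type operator with polynomial drift. Applying It\^o's formula to $u^\psi(t,\xi^\epsilon_t)$ and using $\partial_tu^\psi+L_tu^\psi=0$ shows that this process is a local martingale, and the moment bounds from \Cref{moments} upgrade it to a true martingale, giving
$$
\E[\psi(\xi_T^\epsilon)]=\int_{\R^d}u^\psi(s,y)\rho_s^\epsilon(y)\,dy.
$$
Since $\rho^\epsilon$ is a classical solution of $\partial_t\rho^\epsilon=L_t^\star\rho^\epsilon$ (cf.\ \Cref{generator}), the adjoint identity $X_i^\star=-X_i$ from \eqref{adjoint} produces
$$
\frac{d}{dt}\int_{\R^d}\rho^\epsilon(t,y)\,u^\psi(t,y)\,dy=\int_{\R^d}\rho^\epsilon\,\big(L_tu^\psi+\partial_tu^\psi\big)\,dy=0,
$$
so integrating from $s$ to $T$ and comparing with the probabilistic identity yields $\int\psi\,d\mu_T^\epsilon=\int\psi\,\rho^\epsilon(T,y)\,dy$ for all admissible $\psi$, hence $\mu_T^\epsilon(dy)=\rho^\epsilon(T,y)\,dy$.

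Finally, let $\epsilon\to 0$: standard stability of the strong SDE solution under convergence of initial laws (combined with the moment estimates of \Cref{moments}) gives $\mu_t^\epsilon\rightharpoonup \mathrm{P}(s,t,x,\cdot)$, while \Cref{delta} provides $\rho^\epsilon(t,\cdot)\,dy\rightharpoonup\rho(t,\cdot)\,dy$. Combining these two limits, $\mathrm{P}(s,t,x,dy)=\rho(t,y)\,dy$, and $\rho$ is by construction the energy solution of \eqref{FP}. The main obstacle is constructing $u^\psi$ with sufficient regularity and growth control to legitimately apply It\^o's formula and the martingale upgrade; this requires invoking the hypoelliptic parabolic regularity theory in the presence of polynomially growing drifts, after which the remaining duality-and-limits argument becomes routine.
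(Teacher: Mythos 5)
Your overall strategy (identify the law of $\xi$ with the energy solution by a duality pairing, then let the initial density concentrate to $\delta_{\{x\}}$) is in the right spirit, but it routes the entire argument through an object the paper deliberately avoids: the solution $u^\psi$ of the backward Cauchy problem $\partial_t u + L_t u=0$, $u(T,\cdot)=\psi$. You flag its construction as ``the main obstacle'' and then assume it away, but in this setting that obstacle is precisely the hard part. The coefficients of $L_t$ are polynomials of possibly high degree (the $X_i$ are left-invariant homogeneous fields, not Lipschitz), so the classical hypoelliptic parabolic theory for the backward equation -- existence, smoothness, \emph{and} polynomial growth bounds on $u^\psi$ and $\nabla_\GG u^\psi$ -- is not available off the shelf; the paper's own \Cref{rem-diff} stresses that boundedness of the derivatives of the coefficients is exactly what fails here. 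There is also a circularity in your duality step: to make sense of $\frac{d}{dt}\int \rho^\eps u^\psi\,dy=0$ with $u^\psi$ of polynomial growth you need $\rho^\eps(t,\cdot)$ to have finite moments, but the paper obtains the moment bounds for the solution of the Fokker--Planck equation only \emph{as a consequence} of the probabilistic identification (\Cref{representation}$(ii)$), explicitly noting that they do not seem reachable by analytical arguments alone.

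The paper's proof sidesteps both issues by never solving the backward equation. It applies It\^o's formula to $\varphi(r,\xi_r)$ for an \emph{arbitrary} test function $\varphi\in C^1((s,t);C^\infty_c(\R^d))$: compact support makes the stochastic integrand bounded, so the martingale property and the passage to expectation are immediate, yielding the identity \eqref{app1-1} for $\mathrm{P}(s,\cdot,x,dy)$. On the other side, an integration by parts in the energy formulation \eqref{energy} (legitimate because $X_i^\star=-X_i$ and $\varphi$ is compactly supported) shows that the energy solution $\rho(t,y)\,dy$ of \eqref{FP}, whose existence and uniqueness are already secured by \Cref{delta}, satisfies the \emph{same} identity. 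Uniqueness of the probability-measure flow solving that weak identity (coming from strong uniqueness of the SDE in \Cref{moments} together with uniqueness of the energy solution) then forces $\mathrm{P}(s,t,x,dy)=\rho(t,y)\,dy$, and absolute continuity is inherited from $\rho$. So the duality is run against all compactly supported test functions at once rather than against a single backward solution, which is what removes the need for any regularity theory with polynomially growing data. As written, your proposal has a genuine gap at the construction of $u^\psi$ and at the justification of the pairing $\int\rho^\eps u^\psi$, and I would not accept it without either proving those facts or switching to the test-function formulation.
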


		\proof

For $t>s$, 	let 	
$\varphi \in C^{1}((s,t); C^\infty_c(\R^d))$.
We apply the It\^o's  formula on the time interval $(s,t)$ to the process $Z_r = \varphi(r,\xi_r)$ and we write its stochastic It\^o differential (same notation as the proof of Proposition \ref{moments}). So, we obtain 
\begin{align*}
\varphi(t,\xi_{t})
= \varphi(s,\xi_s)+  \int_s^{t}(\partial_r+L_r) \varphi(r,\xi_r) dr + \int_s^{t} \nabla\varphi(r,\xi_r)^T \X(\xi_r)dB_r,
\end{align*}
where $\X(x)=\big[X_1(x), \dots, X_m(x)\big]$ and $L_t$ is given in \eqref{Lt}.
We notice that, on the time interval $[s, t]$, the random function $\nabla\varphi(r,\xi_r)^T \X(\xi_r)$ is bounded, so its It\^o integral  is a martingale and, by passing to the expectation, we obtain
\begin{equation*}
\E^{s, x} \left[  \varphi(t,\xi_t) \right] = \varphi(x) + \E^{s, x} \left[\int_{s}^{t}  (\partial_r+L_r)  \varphi(r,\xi_r) \;dr\right].
\end{equation*}
By using \eqref{def}, we can rewrite the above equality as
\begin{equation}\label{app1-1}
\int_{\R^d}\varphi(t,y)\mathrm{P}(s, t, x, dy)
= \varphi(x) 
+ \int_{s}^{t} \int_{\R^d} (\partial_r+L_r) \varphi(r,y)\mathrm{P}(s, t, x, dy)\;dr.
\end{equation}
Since, by \Cref{moments}, $\xi$ is the unique solution to \eqref{diffusion}, we get that $\mathrm{P}(s, t, x, dy)$ is the unique probability measure such that \eqref{app1-1} holds for every test function $\varphi\in C^{1}((s,t); C^\infty_c(\R^d))$.

Now, let   $\rho$ denote the unique energy solution to \eqref{FP}.
By using integration-by-parts in \eqref{energy} and by recalling \eqref{Lstar}, 
we get	
\begin{equation}\label{app2-2}
\int_{\R^d}\varphi(t, y)\rho(t,y)\, dy= \varphi(x) 
+\int_s^t\int_{\R^d} (\partial_u+L_u)\varphi (y)\rho(u,y)\, dydu.
\end{equation}
So, as $\rho$ uniquely satisfies \eqref{app2-2} for every $\varphi \in C^{1}((s,t); C^\infty_c(\R^d))$ and, by \Cref{thm:ex_FP1},  $\rho(t, y)\;dy$ is a probability measure, by comparing \eqref{app1-1} and \eqref{app2-2} one gets  $\mathrm{P}(s,t,x,dy)=\rho(t,y)dy$.

 \qed


\begin{remarks}\label{rem-diff}\em 
If $\beta$ does not depend on the space variable, by using the iteration argument in \cite{Roynette} one can get the existence of the transition density of the diffusion in \eqref{diffusion}	by means of a probabilistic approach. If this is not the case, the existence of the probability density of solutions to SDEs is a problem which is classically investigated by means of Malliavin calculus, we refer to \cite{Nua06} and references quoted therein. Typically, one asks for the smoothness of the diffusion coefficients and that the H\"ormander condition holds for $X_1,\ldots,X_m$. But, more significantly, it is required that $X_0, X_1,\ldots,X_m$ have all continuous and \textit{ bounded} derivatives with respect to the space variable, and this is not our case, as $X_1,\ldots,X_m$ are polynomials. This explains why the existence of the transition density given in \Cref{Ito} is not a trivial consequence of  classical results in stochastic calculus.

\end{remarks}		
		
By resuming, we can	state the following result.
			
		\begin{theorem}\label{representation}
		Assume that $\beta \in C([0,\infty); (C^{1,\alpha}_{\GG}(\R^{d}))^m)$, for some $\alpha \in (0,1]$, satisfies 
\begin{equation}\label{negative}
\sup_{t\in[0,T]}\|\beta(t,\cdot)\|_{(L^\infty(\R^{d}))^m} <+\infty \;\; \textrm{and} \; \sup_{t\in [0, T]}\|[\div_{\GG}\beta(t,\cdot)]_{-}\|_{L^{\infty}(\R^d)} < \infty
\end{equation}	
for any $T > 0$. 
		Then, for every $x\in \R^d$ and $s\geq 0$, the unique weak solution $\rho$ to the subelliptic Fokker-Planck equation \eqref{FP} is given by the density in $\R^d$ of the transition probability measure of the process $\xi$ solution to \eqref{diffusion}:
		\begin{equation}\label{process}
		\rho(t,y)=p(s,t,x,y), \text{ with }	p(s,t,x,y)dy=\E^{s,x} \left[{\bf 1}_{\{\xi_{t}\in\; dy\}}\right]
		\end{equation}
satisfies  the problem
$$
\begin{cases}
\partial_t \rho(t,y) - \Delta_{\GG} \rho(t, y) - \div_{\GG}\big(\beta(t, y)\rho(t, y)\big) = 0, \quad (t, y) \in (s, \infty) \times \R^d
\\
		\displaystyle
\rho(t, y)dy\rightharpoonup \delta_{\{x\}}(dy)
\mbox{ as } t\downarrow s
\end{cases}
$$
in the weak sense. Moreover, the following properties hold. 
\begin{itemize}
\item[($i$)] {\bf [H\"older continuity]} The map $t\mapsto p(s,t,x,dy)=p(s,t,x,y)dy$ is locally H\"older continuous with exponent $1/2$ w.r.t. the Fortet-Mourier distance: for every $T\geq s\geq 0$ there exists a constant $C_{s,T}(\beta)$ such that for every $t_1,t_2\in[s,T]$ one has
\begin{equation*}
d_0 (p(s, t_1, x, \cdot),p(s, t_2, x, \cdot)) \leq 4C_{s,T}(\beta) |t_1-t_2|^{\frac{1}{2}},
\end{equation*}
where $C_{s,T}(\beta)=1+\displaystyle{\sup_{r \in [s, T]}} \|\beta(r, \cdot)\|_{(L^\infty(\R^d))^m}$. 
\item[($ii$)] {\bf[Existence of all moments]}
For every $T\geq s\geq 0$ and $q\geq 1$ there exist two constants $C_{q,T}>0$ (depending on $q$ and $T$) and $a_q\geq 1$ (depending on $q$ only) such that 
\begin{equation*}
\sup_{t \in [s, T]}\int\|y\|_\GG^q\,p(s,t,x,y)dy \leq C_{q,T}(1+\|x\|_\GG)^{a_q}.
\end{equation*}
\end{itemize}
 \end{theorem}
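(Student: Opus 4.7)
The plan is to harvest the three ingredients already in place: existence/uniqueness of the energy solution with Dirac initial datum (\Cref{delta}), its probabilistic identification as a transition density (\Cref{Ito}), and the moment estimate for the SDE (\Cref{moments}). The representation \eqref{process} and the well-posedness of the Fokker-Planck problem with $\delta_{\{x\}}$ initial trace follow by simply quoting \Cref{delta} and \Cref{Ito} in sequence: \Cref{delta} delivers the unique energy solution $\rho$, while \Cref{Ito} shows that $\rho(t,y)\,dy$ coincides with the law $\mathrm{P}(s,t,x,\cdot)$ of $\xi_t$, so that $\rho(t,y)=p(s,t,x,y)$ with $p$ defined by the right-hand side of \eqref{process}.

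For property $(i)$, I would transfer the Hölder estimate in \Cref{thm:ex_FP1}$(iii)$ to the case of a Dirac initial trace by the very same approximation that proves \Cref{delta}. Pick smooth, compactly supported probability densities $\rho_s^{\eps}\in L^\infty(\R^d)\cap L^1(\R^d)$ with $\rho_s^{\eps}\rightharpoonup \delta_{\{x\}}$ as $\eps\downarrow 0$, and let $\rho^{\eps}$ be the corresponding energy solutions of \eqref{FP_regular}. The crucial observation is that the constant $C_{s,T}(\beta)$ in \Cref{thm:ex_FP1}$(iii)$ depends only on $\beta$ and not on the initial datum, so
\begin{equation*}
d_0\bigl(\rho^{\eps}(t_1,\cdot),\rho^{\eps}(t_2,\cdot)\bigr)\le 4 C_{s,T}(\beta)|t_1-t_2|^{1/2},\quad t_1,t_2\in[s,T].
\end{equation*}
Since convergence in the Fortet-Mourier distance is equivalent to weak convergence of probability measures on compact sets and the test functions in \eqref{dFM} have a uniform Lipschitz bound, passing to the limit $\eps\downarrow 0$ preserves the inequality with $\rho^{\eps}$ replaced by $p(s,\cdot,x,\cdot)$; this yields $(i)$.

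For property $(ii)$, I would rely entirely on the probabilistic side. Invoking \eqref{process}, for every $q\geq 1$,
\begin{equation*}
\int_{\R^d}\|y\|_\GG^{q}\, p(s,t,x,y)\,dy=\E^{s,x}\bigl[\|\xi_t\|_\GG^{q}\bigr]\le \E^{s,x}\Bigl(\sup_{s\le r\le t}\|\xi_r\|_\GG^{q}\Bigr).
\end{equation*}
Under the standing assumptions, $\beta$ is bounded on $[0,T]\times\R^d$ for every $T>0$, so \Cref{moments} applies and produces constants $C>0$ and $a\ge 1$ with
\begin{equation*}
\E^{s,x}\Bigl(\sup_{s\le r\le t}\|\xi_r\|_\GG^{q}\Bigr)\le C\bigl(1+\|x\|_\GG\bigr)^{a_q},\quad t\in[s,T].
\end{equation*}
Taking the supremum over $t\in[s,T]$ produces the claimed estimate with $C_{q,T}=C$ and $a_q=a$.

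The only step that requires a bit of care is the limit $\eps\downarrow 0$ in $(i)$: one must check that for every $\varphi\in C^{0,1}_{\GG}(\R^d)$ with $\|\varphi\|_{C^{0,1}_{\GG}(\R^d)}\le 1$ the integrals $\int\varphi\,\rho^{\eps}(t,\cdot)\,dy$ converge to $\int\varphi\,p(s,t,x,\cdot)\,dy$ uniformly in the test function, but this is immediate since $\rho^{\eps}(t,\cdot)\,dy\rightharpoonup p(s,t,x,\cdot)\,dy$ as probability measures (by \Cref{delta}) and the class of admissible $\varphi$ is uniformly Lipschitz hence equicontinuous. Everything else is essentially assembly of previous results.
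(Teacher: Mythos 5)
Your proposal is correct and follows essentially the same route as the paper, which simply assembles \Cref{Ito} for the representation \eqref{process}, \Cref{thm:ex_FP1}($iii$) for the H\"older continuity, and the identity $\int\|y\|_\GG^q\,p(s,t,x,y)\,dy=\E^{s,x}[\|\xi_t\|_\GG^q]$ together with \Cref{moments} for the moment bound. The only difference is that you spell out the $\rho_s^{\eps}\rightharpoonup\delta_{\{x\}}$ approximation needed to transfer the H\"older estimate to the Dirac initial datum (the paper leaves this implicit, citing \Cref{thm:ex_FP1}($iii$) directly), and note that the uniformity in the test function you worry about is not actually needed: passing to the limit for each fixed $\varphi$ and then taking the supremum already suffices.
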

 \proof 
 
\eqref{process} is proved in \Cref{Ito}. Moreover, ($i$) follows from ($iii$) in \Cref{thm:ex_FP1} and $(ii)$ is an immediate consequence of \Cref{Ito} and the fact that
\begin{equation*}
\int\|y\|_\GG^q\,p(s,t,x,y)dy
=\E^{s,x}(\|\xi_t\|_{\GG}^q)
\leq \E^{s,x}\left(\sup_{r \in [s, T]}\|\xi_r\|_{\GG}^q\right). \eqno\square
\end{equation*}

\begin{remarks}\em
We recall that, since \Cref{thm:ex_FP1} is independent of the degree of homogenity of the vector fields $\{X_1, \dots, X_m\}$,  we also deduce that the H\"older continuity of $t\mapsto p(s, t, x, \cdot )$ holds in the following sense: if $\{X_1, \dots, X_m\}$ are $\lambda$-homogeneous vector fields w.r.t $\D_t$ we have
 \begin{equation*}
d_0 (p(s, t_1, x, \cdot),p(s, t_2, x, \cdot))  \leq  4C_{s,T}(\beta, \lambda)  |t_1-t_2|^{\frac{1}{\lambda + 1}} \qquad \forall\; t_1, t_2 \in (s,T].
\end{equation*} 
Similarly, as the moment estimates in \Cref{Ito} hold for $\lambda$-homogeneous w.r.t. $\D_t$ vector filed, statement $(ii)$ in \Cref{representation} holds as well.
 \end{remarks}
 
 		\subsection{Feynman-Kac formula}
 
 Having at our disposal the existence of all moments of the subelliptic process $\xi_t$, we next provide the Feynman-Kac formula, part of which has been already found (for instance) in \cite{Fundamental, Schauder, Uniqueness} with analytical techniques.

 \begin{theorem}\label{Feynman-Kac}
 	Let $t>0$  and $L_t$ be given in \eqref{Lt}. Suppose there exists a classical solution $u\in C_{\GG}^{1,2}([0,t)\times \R^d)$ of the parabolic equation
 	\begin{equation}\label{parabolic}
 	\begin{cases}
 	\partial_s u(s, x) + L_s u(s, x) + h(s, x) u(s, x ) = f(s, x), & (s, x) \in [0,t) \times \R^d
 	\\
 	u(t, x) = \psi(x), & x \in \R^d
 	\end{cases}
 	\end{equation}
 	where $h$ is bounded from above and $\psi$ and $f$ are continuous and satisfying the following growth condition: 
 	there exist $L>0$ and $p\geq 0$ such that
 	$$
 	|\psi(x)|+\sup_{s\in [0,t]}|f(s,x)|\leq N(1+\|x\|_{\GG})^p.
 	$$
Assume that the solution $u$ satisfies the same polynomial growth condition. Then, for $(s, x) \in [0,t] \times \R^d$, 	\begin{equation}\label{representation1}
 	u(s, x) = \E^{s, x} \left[\psi(\xi_t)e^{\int_{s}^{t}h(r, \xi_r)\;dr} \right]
 	- \E^{s, x} \left[\int_{s}^{t} f(v, \xi_v)e^{\int_{s}^{v}h(r, \xi_r)\;dr}\;dv \right].
 	\end{equation}
As a consequence, \eqref{representation1} gives the unique solution in the class of the functions having polynomial growth in $x$, uniformly in $s\in[0,t]$. 
 \end{theorem}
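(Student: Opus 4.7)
The plan is to use the classical stochastic representation argument adapted to our subelliptic setting. Define the exponential functional $E_r = \exp\bigl(\int_s^r h(v,\xi_v)\,dv\bigr)$, which, since $h$ is bounded from above by some constant $K$, satisfies $0< E_r\leq e^{K(r-s)}$ for $r\in[s,t]$. Because $u\in C^{1,2}_{\GG}([0,t)\times\R^d)$ and $\xi$ is a diffusion with generator $L_r$ (see \eqref{Lt}), I would apply It\^o's formula to $r\mapsto u(r,\xi_r)E_r$. Using the product rule together with the identity $du(r,\xi_r)=(\partial_r u+L_r u)(r,\xi_r)\,dr+\sqrt 2\sum_i (X_iu)(r,\xi_r)\,dB^i_r$ and substituting the PDE $\partial_ru+L_ru=f-hu$, the drift collapses to $f(r,\xi_r)E_r\,dr$, so that
\begin{equation*}
u(r,\xi_r)E_r-u(s,x)=\int_s^r f(v,\xi_v)E_v\,dv+\sqrt 2\int_s^r E_v\sum_{i=1}^m (X_iu)(v,\xi_v)\,dB^i_v.
\end{equation*}

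Next, since we do not know that $X_iu$ has controlled growth globally, I would localize by introducing $\tau_n=\inf\{r\geq s:\|\xi_r\|_{\GG}\geq n\}\wedge t$. On $[s,\tau_n]$ the integrand $E_v\,X_iu(v,\xi_v)$ is bounded (by continuity on a compact set and the bound on $E$), so the stopped stochastic integral is a true martingale of zero mean. Taking expectations at $\tau_n$ yields
\begin{equation*}
\E^{s,x}[u(\tau_n,\xi_{\tau_n})E_{\tau_n}]=u(s,x)+\E^{s,x}\!\left[\int_s^{\tau_n} f(v,\xi_v)E_v\,dv\right].
\end{equation*}
Since \Cref{moments} excludes explosion, $\tau_n\to t$ almost surely and $\xi_{\tau_n}\to \xi_t$. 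At $r=t$, the terminal condition $u(t,\cdot)=\psi$ applies.

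The main technical point is justifying the passage to the limit $n\to\infty$. Using the assumed polynomial growth of $u$ and $f$ together with the boundedness of $E$, I bound
\begin{equation*}
|u(\tau_n,\xi_{\tau_n})|E_{\tau_n}+\int_s^{t}|f(v,\xi_v)|E_v\,dv\leq C\bigl(1+\sup_{r\in[s,t]}\|\xi_r\|_{\GG}\bigr)^p,
\end{equation*}
for a constant $C=C(K,N,t)$. The right-hand side has finite expectation by \eqref{Lp-est} in \Cref{moments}, so dominated convergence applies and gives
\begin{equation*}
u(s,x)=\E^{s,x}\!\left[\psi(\xi_t)E_t\right]-\E^{s,x}\!\left[\int_s^t f(v,\xi_v)E_v\,dv\right],
\end{equation*}
which is exactly \eqref{representation1}. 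For the uniqueness claim, if $u_1,u_2$ are two classical solutions of \eqref{parabolic} both satisfying the polynomial growth condition, then $w=u_1-u_2$ is a classical solution to the same equation with $f\equiv 0$ and $\psi\equiv 0$ and itself has polynomial growth, so the formula just proved yields $w\equiv 0$.

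The only real obstacle is the integrability control in the limit step: it hinges essentially on the moment estimates of \Cref{moments}, which is precisely what makes a probabilistic approach viable in our subelliptic, polynomial-growth setting, where the coefficients $X_i$ are unbounded and standard Malliavin-type arguments do not directly apply (cf.\ \Cref{rem-diff}). Once that domination is in place, the argument follows the standard template.
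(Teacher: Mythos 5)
Your proposal is correct and follows essentially the same route as the paper: It\^o's formula applied to $e^{\int_s^r h(v,\xi_v)dv}\,u(r,\xi_r)$, localization by exit times from balls, and passage to the limit via the moment estimates of \Cref{moments}. The only (cosmetic) difference is that the paper isolates the boundary term $\E^{s,x}\bigl[e^{\int_s^{\tau_R}h}\,u(\tau_R,\xi_{\tau_R})1_{\{\tau_R\leq t\}}\bigr]$ and kills it explicitly with Markov's inequality (taking $q=p+1$), whereas you absorb it into a single dominated-convergence step using that $\tau_n$ is eventually equal to $t$ almost surely; both rest on the same moment bound \eqref{Lp-est}.
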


 \begin{proof}
 	
Let $s\in[0,t]$ and $x\in\R^d$ be fixed. Let $R>0$ be such that $x\in B^\GG_R$ and let $\tau_R$ denote the exit time of $\xi_t$ from the ball $B^\GG_R$. We apply the It\^o's  formula on the time interval $[s,t\wedge \tau_R]$ to the process $Z_r =e^{\int_s^{r}h(v,\xi_v)dv} u(r,\xi_r)$ and we write the stochastic differential in terms of an It\^o integral (see notations in the proof of \Cref{Ito}). So, we get
 \begin{align*}
e^{\int_s^{t\wedge \tau_R}h(v,\xi_v)dv} u(t\wedge \tau_R,\xi_{t\wedge \tau_R})
=& u(s,\xi_s)
+  \int_s^{t\wedge \tau_R}e^{\int_s^vh(v,\xi_v)dv}(\partial_r +L_r+h)u(r, \xi_r) dr \\
&+ \int_s^{t\wedge \tau_R} e^{\int_s^vh(v,\xi_v)dv}\nabla_xu(r,\xi_r)^T \X(\xi_r)dB_r
\end{align*}
and by using \eqref{parabolic} we obtain
 \begin{align*}
u(s,\xi_s)
=& 
e^{\int_s^{t\wedge \tau_R}h(v,\xi_v)dv} u(t\wedge \tau_R,\xi_{t\wedge \tau_R})
-  \int_s^{t\wedge \tau_R}e^{\int_s^vh(v,\xi_v)dv}f(r, \xi_r) dr \\
&- \int_s^{t\wedge \tau_R} e^{\int_s^vh(v,\xi_v)dv}\nabla_xu(r,\xi_r)^T \X(\xi_r)dB_r
\end{align*}
We now pass to expectation. Since the integral w.r.t. the Brownian motion is a martingale (the process inside being bounded on $[s,\tau_R]$),  we get
 \begin{align*}
u(s,x)
=&\E^{s,x}\left[e^{\int_s^{t}h(v,\xi_v)dv} \psi(\xi_{t})1_{\{\tau_R>t\}}\right]
+\E^{s,x}\left[e^{\int_s^{\tau_R}h(v,\xi_v)dv} u(\tau_R,\xi_{\tau_R})1_{\{\tau_R\leq t\}}\right]\\
&-\E^{s,x}\left[  \int_s^{t\wedge \tau_R}e^{\int_s^vh(v,\xi_v)dv}f(r, \xi_r) dr\right]
=\gamma^1_R+\gamma^2_R-\gamma^3_R.
\end{align*}
It now suffices to study the asymptotic behavior, as $R\to\infty$, of $\gamma^i_R$ for $i=1,2,3$.

By \Cref{moments},  $\tau_R\uparrow +\infty$. Then, since $h$ is bounded from above and $\psi$ grows polynomially, by \Cref{moments} and the Lebesgue dominated theorem one easily gets 
$$
\gamma^1_R\to \E^{s,x}\left[e^{\int_s^{t}h(v,\xi_v)dv} \psi(\xi_{t})\right].
$$
And with similar arguments one obtains 
$$
\gamma^3_R\to \E^{s,x}\left[  \int_s^{t}e^{\int_s^vh(v,\xi_v)dv}f(r, \xi_r) dr\right].
$$
It remains to prove that $\gamma^2_R\to 0$. In the following, $C$ denotes a constant which can vary from a line to another, can depend on the involved  parameters but is independent of $R$.

First, since $h$ is bounded from above and $u$ grows polynomially in the space variables uniformly in the time variable, for some constants $C>0$ and $p\geq 0$ we get
$$
|\gamma^2_R|\leq C\,\E^{s,x}\Big[(1+\|\xi_{\tau_R}\|_\GG^p)1_{\{\tau_R\leq t\}}\Big]= C(1+R)^p\,\P^{x,s}(\tau_R\leq t).
$$
By the Markov inequality, for any $q\geq 1$  we have
$$
\P^{s,x}(\tau_R<t)
=\P^{s,x}\left(\displaystyle{\sup_{r \in [s, t]}}\|\xi_r\|_\GG>R\right)
\leq \frac{1}{R^q}\E^{s,x}\left(\displaystyle{\sup_{r \in [s, t]}}\|\xi_r\|_\GG^q\right)
\leq \frac{C}{R^q}(1+\|x\|_\GG)^a,
$$
where $C>0$ and $a\geq 1$ depend on $q$ but are independent of $R$. Taking $q=p+1$ and inserting above, we get
$$
 |\gamma^2_R|\leq C\, \frac{(1+R)^p}{R^{p+1}}(1+\|x\|_\GG)^a\to 0\mbox{ as $R\to\infty$}.
$$
The statement now holds.
\end{proof}



\end{document}